\journal{{\tt arXiv.org}}
\newcommand{\externaltikz}[2]{\includegraphics{Externals/#1}}		
\newtheorem{theorem}{Theorem}[section]
\newtheorem{definition}[theorem]{Definition}
\newtheorem{example}[theorem]{Example}
\newtheorem{lemma}[theorem]{Lemma}
\newcounter{tikzsubfigcounter}[figure]
\renewcommand{\thetikzsubfigcounter}{\thesection.\the\numexpr\value{figure}+1\relax\alph{tikzsubfigcounter}}
\newcounter{tikzsubfigcounterinvisible}[figure]
\renewcommand{\thetikzsubfigcounterinvisible}{\thesection.\the\numexpr\value{figure}+1\relax\alph{tikzsubfigcounterinvisible}}
\newcommand{\settikzlabel}[1]{ %
\refstepcounter{tikzsubfigcounterinvisible} \label{#1} 
}
\newtheorem{thm}{\bf Theorem}[section]
\newtheorem{lem}[thm]{\bf Lemma}
\newtheorem{rem}[thm]{\bf  Remark}
\numberwithin{equation}{section}
\newcommand{\changefont}[3]{
\fontfamily{#1} \fontseries{#2} \fontshape{#3} \selectfont}
\newcommand\defcase[1]{\@namedef{mycase@\the\numexpr#1\relax}}
\newcommand\picpos[1]{\@nameuse{mycase@\the\numexpr#1\relax}}
\newcommand{\secref}[1]{Section~\ref{#1}}
\newcommand{\lemref}[1]{Lemma~\ref{#1}}
\newcommand{\figref}[1]{Figure~\ref{#1}}
\newcommand{\abs}[1]{\ensuremath{\left| #1 \right|}}
\newcommand{\norm}[2]{\ensuremath{\left|\left| #1 \right|\right|_{#2}}}
\newcommand{\R}{\mathbb{R}}
\newcommand{\Rpos}{\R_{\geq 0}}
\newcommand{\scattering}{\ensuremath{\sigma_s}}
\newcommand{\absorption}{\ensuremath{\sigma_a}}
\newcommand{\source}{\ensuremath{Q}}
\newcommand{\sphere}[1][2]{\ensuremath{S^{#1}}}
\newcommand{\sign}[1]{\ensuremath{\operatorname{sign}\left(#1\right)}}
\newcommand{\distribution}[1][ ]{\ensuremath{\psi_{#1}}}
\newcommand{\distributiontzero}{\ensuremath{\distribution[\timevar=0]}}
\newcommand{\distributionboundary}{\ensuremath{\distribution[b]}}
\newcommand{\ansatz}[1][ ]{\ensuremath{\hat{\psi}_{#1}}}
\newcommand{\momentorder}{\ensuremath{N}}
\newcommand{\momentnumber}{\ensuremath{n}}
\newcommand{\basis}[1][ ]{{\ensuremath{\bb_{#1}}}} %Basis
\newcommand{\basisind}{\ensuremath{i}} %Basis
\newcommand{\basiscomp}[1][\basisind]{\ensuremath{b_{#1}}} %Basis
\newcommand{\mmbasis}[1][\momentorder]{\ensuremath{\bm_{#1}}} %Basis
\newcommand{\fmbasis}[1][\momentorder]{\ensuremath{\bff_{#1}}} %Basis
\newcommand{\pmbasis}[1][\momentorder]{\ensuremath{\bp_{#1}}} %Basis
\newcommand{\moments}[1][ ]{\ensuremath{\bu_{#1}}} %moment vector
\newcommand{\momentcomp}[1]{\ensuremath{u_{#1}}} %moment vector
\newcommand{\isotropicmoment}{\moments[\text{iso}]}
\newcommand{\normalizedmoments}[1][ ]{\ensuremath{\changefont{cmr}{m}{n}\bsphi_{#1}}} %moment vector
\newcommand{\normalizedmomentcomp}[1]{\ensuremath{\phi_{#1}}} %moment vector
\newcommand{\multipliers}[1][ ]{\ensuremath{\bsalpha_{#1}}} %moment vector
\newcommand{\eddington}{\ensuremath{\chi}} %eddington factor
\newcommand{\convexscalar}{\ensuremath{\zeta}} %scalar indicating a convex combination
\newcommand{\SC}{\ensuremath{\Omega}} %moment vector
\newcommand{\SCheight}{\ensuremath{\mu}} %moment vector
\newcommand{\SCangle}{\ensuremath{\varphi}} %moment vector
\newcommand{\LaplaceBeltrami}{\ensuremath{\Delta_\SC}} %moment vector
\newcommand{\Domain}{\ensuremath{X}} %moment vector
\newcommand{\outernormal}{\ensuremath{\bn}} %moment vector
\newcommand{\spatialVariable}{\ensuremath{\bx}} %moment vector
\newcommand{\timeint}{\ensuremath{T}} %time interval
\newcommand{\tf}{\ensuremath{t_f}} %final time
\newcommand{\timevar}{\ensuremath{t}} %final time
\newcommand{\ints}[1]{\ensuremath{\left<#1\right>}}
\newcommand{\intA}[2]{\ensuremath{\left<#1\right>_{#2}}}
\newcommand{\collisionop}{\ensuremath{\cC}}
\newcommand{\collision}[1]{\ensuremath{\collisionop\left(#1\right)}}
\newcommand{\dirac}{\ensuremath{\delta}}
\newcommand{\diracmd}{\ensuremath{\bsdelta}}
\newcommand{\indicator}[1]{\ensuremath{1_{#1}}}
\newcommand{\betafun}{\ensuremath{\mathcal{B}}}
\newcommand{\Lp}[1]{\ensuremath{L_{#1}}}
\newcommand{\RD}[2]{\ensuremath{\mathcal{R}_{#1}^{#2}}}
\newcommand{\unitnormalvector}{\ensuremath{\bsnu}}
\newcommand{\PN}[1][\momentorder]{\ensuremath{\text{P}_{#1}}}
\newcommand{\MN}[1][\momentorder]{\ensuremath{\text{M}_{#1}}}
\newcommand{\KN}[1][\momentorder]{\ensuremath{\text{K}_{#1}}}
\newcommand{\QMN}[1][\momentorder]{\ensuremath{\text{QM}_{#1}}}
\newcommand{\QKN}[1][\momentorder]{\ensuremath{\text{QK}_{#1}}}
\newcommand{\MMN}[1][\momentorder]{\ensuremath{\text{MM}_{#1}}}
\newcommand{\MKN}[1][\momentorder]{\ensuremath{\text{MK}_{#1}}}
\newcommand{\SPN}[1][\momentorder]{\ensuremath{\text{SP}_{#1}}}
\newcommand{\eigenvector}{\ensuremath{\bv}}
\newcommand{\eigenvalue}{\ensuremath{\lambda}}
\DeclareMathOperator*{\argmin}{argmin}
\newcommand{\ld}[1]{\ensuremath{{#1}_*}} %Legendre dual
\newcommand{\entropy}{\ensuremath{\eta}} %Entropy function
\newcommand{\entropyFunctional}{\ensuremath{\mathcal{H}}} %Entropy functional
\def\quand{\quad \mbox{and} \quad}
\newcommand{\x}{\ensuremath{x}}
\newcommand{\y}{\ensuremath{y}}
\newcommand{\z}{\ensuremath{z}}
\newcommand{\dx}{\partial_{\x}}
\newcommand{\dy}{\partial_{\y}}
\newcommand{\dt}{\partial_\timevar}
\newcommand{\de}{\partial}
\newcommand{\Dx}{\nabla_\spatialVariable}
\newcommand{\momentstens}[2][ ]{\ensuremath{\bu_{#1}^{\abs{#2}}}} %moment vector
\newcommand{\momentcomptens}[3][ ]{\ensuremath{u_{#1}^{\left(#2,#3\right)}}} %moment vector
\newcommand{\multiplierstens}[2][ ]{\ensuremath{\bsalpha_{#1}^{\abs{#2}}}} %moment vector
\newcommand{\multiplierscomptens}[3][ ]{\ensuremath{\alpha_{#1}^{\left(#2,#3\right)}}} %moment vector
\newcommand{\normalizedmomentstens}[2][ ]{\ensuremath{\normalizedmoments[#1]^{\abs{#2}}}} %moment vector
\newcommand{\normalizedmomentcomptens}[3][ ]{\normalizedmomentcomp{#1}^{\left(#2,#3\right)}} %moment vector
\newcommand{\spheresubset}{\ensuremath{D}}
\newcommand{\Spp}{\mathcal{S}_{++}}
\newcommand{\Smp}{\mathcal{S}_{-+}}
\newcommand{\Spm}{\mathcal{S}_{+-}}
\newcommand{\Smm}{\mathcal{S}_{--}}
\newcommand{\Sij}{\mathcal{S}_{ij}}
\newcommand{\Sxp}{\mathcal{S}_{x}^+}
\newcommand{\Sxm}{\mathcal{S}_{x}^-}
\newcommand{\Sxpm}{\mathcal{S}_{x}^\pm}
\newcommand{\Syp}{\mathcal{S}_{y}^+}
\newcommand{\Sym}{\mathcal{S}_{y}^-}
\newcommand{\Sypm}{\mathcal{S}_{y}^\pm}
\newcommand{\intsxp}[1]{\intA{#1}{\Sxp}}
\newcommand{\intsxm}[1]{\intA{#1}{\Sxm}}
\newcommand{\intsyp}[1]{\intA{#1}{\Syp}}
\newcommand{\intsym}[1]{\intA{#1}{\Sym}}
\newcommand{\intsij}[1]{\intA{#1}{\Sij}}
\newcommand{\intpp}[1]{\intA{#1}{\Spp}}
\newcommand{\intmp}[1]{\intA{#1}{\Smp}}
\newcommand{\intmm}[1]{\intA{#1}{\Smm}}
\newcommand{\intpm}[1]{\intA{#1}{\Spm}}
\newcommand{\SCx}{\ensuremath{\SC_\x}}
\newcommand{\SCy}{\ensuremath{\SC_\y}}
\newcommand{\basisindx}{\ensuremath{i_\x}}
\newcommand{\basisindy}{\ensuremath{i_\y}}
\newcommand{\absc}[1]{\ensuremath{\left| #1 \right|_c}}
\newcommand{\MMNorm}[1]{\ensuremath{\mathcal{N}\left(#1\right)}}
\def\MMconstant{\ensuremath{c}}
\newcommand{\QKalpha}[1]{\ensuremath{\alpha_{#1}}}
\newcommand{\winkel}{<\hspace{-1.25ex})\hspace{0.0ex}}
\newcommand{\QKdistcoeff}[1]{\ensuremath{c_{#1}}}
\newcommand{\QKdistdens}[1]{\ensuremath{\rho_{#1}}}
\newcommand{\QKdistdensub}[1]{\ensuremath{\overline{\rho}_{#1}}}
\newcommand{\QKdisteig}[1]{\ensuremath{\bar{\lambda}_{#1}}}
\newcommand{\QKdistind}{\ensuremath{\iota}}
\newcommand{\QKdisteigvector}{\ensuremath{\bar{\eigenvector}}}
\newcommand{\MKLBconst}[1]{\ensuremath{d_{#1}}}
\newcommand{\MKLBmatrix}{\ensuremath{S}}
\def\LinesourceWidth{\ensuremath{\sigma}}
\def\TwoBeamsWidth{\ensuremath{\sigma}}
\newcommand{\bdm}{\begin{displaymath}}
\newcommand{\edm}{\end{displaymath}}
\newcommand{\beq}{\begin{equation}}
\newcommand{\eeq}{\end{equation}}
\newcommand{\beqa}{\begin{eqnarray}}
\newcommand{\eeqa}{\end{eqnarray}}
\title{First-order quarter- and mixed-moment realizability theory and Kershaw closures for a Fokker-Planck equation in two space dimensions}
\author[fs]{Florian Schneider}
\author[jk]{Jochen Kall}
\author[ar]{Andreas Roth}
\address[fs]{Fachbereich Mathematik, TU Kaiserslautern, Erwin-Schr\"odinger-Str., 67663 Kaiserslautern, Germany, {\tt schneider@mathematik.uni-kl.de}}
\address[jk]{Fachbereich Mathematik, TU Kaiserslautern, Erwin-Schr\"odinger-Str., 67663 Kaiserslautern, Germany, {\tt kall@mathematik.uni-kl.de}}
\address[ar]{Fachbereich Mathematik, TU Kaiserslautern, Erwin-Schr\"odinger-Str., 67663 Kaiserslautern, Germany, {\tt roth@mathematik.uni-kl.de}}
\date{}
\begin{document}

\begin{abstract}
Mixed-moment models, introduced in \cite{Frank07,Schneider2014} for one space dimension, are a modification of the method of moments applied to a (linear) kinetic equation, by choosing mixtures of different partial moments. They are well-suited to handle such equations where collisions of particles are modelled with a Laplace-Beltrami operator. We generalize the concept of mixed moments to two dimension. The resulting hyperbolic system of equations has desirable properties, removing some drawbacks of the well-known $\MN[1]$ model. We furthermore provide a realizability theory for a first-order system of mixed moments by linking it to the corresponding quarter-moment theory. Additionally, we derive a type of Kershaw closures for mixed- and quarter-moment models, giving an efficient closure (compared to minimum-entropy models). The derived closures are investigated for different benchmark problems.
\end{abstract}
\begin{keyword}
radiation transport \sep moment models \sep realizability \sep
mixed moments \sep Laplace-Beltrami operator
\MSC[2010] 35L40 \sep 35Q84 \sep 65M08 \sep 65M70 
\end{keyword}
\maketitle

\noindent

% {\bf Key words.}

%%%%%%%%%%%%%%%%%%%%%%
\section{Introduction}
%%%%%%%%%%%%%%%%%%%%%%
The full discretization of kinetic transport equations like the Fokker-Planck equation is in general very expensive since the discretized variable resides in $\Domain\times\sphere\times[0,\tf]$ where $\Domain\subset \R^3$ and $\sphere$ denotes the unit sphere in $\R^3$. Thus, the solution of the Fokker-Planck equation is very high-dimensional. 

A common approach to reduce the dimensionality is given by the method of moments \cite{Eddington,Lev96}. One chooses a set of angular basis functions, tests the Fokker-Planck equation with it and integrates over the angular variable, removing the angular dependence while getting a system of equations in $\spatialVariable$ and $\timevar$. 
Assuming now a specific form of the underlying distribution function, different approximate systems arise. Typical examples are the well-known spherical harmonics or $\PN$ models \cite{Jea17,Eddington,Brunner2005} and their simplifications, the $\SPN$ \cite{Gel61} methods. These models are computationally inexpensive since they form an analytically closed system of hyperbolic differential equations. However, they suffer from severe drawbacks:
The $\PN$ methods are generated by closing the  balance equations with a distribution function which is a polynomial in the angular variable. This implies that this distribution function might be negative resulting in non-physical values like a negative particle density.  Additionally, in many cases a very high number of moments is needed for a reasonable approximation of the transport solution. This is in particular true in beam cases, where the exact transport solution forms a Dirac delta.
The entropy minimization $\MN$-models \cite{Min78,DubFeu99,BruHol01,Monreal2008,AllHau12} are expected to overcome this problem since their closure functions are always positive. In many situations these models perform very well. Still they produce unphysical steady-state shocks due to a zero netflux problem. 

 To improve this situation, half or partial moment models have been introduced in \cite{DubKla02,FraDubKla04}. These models work especially well in one space dimension, because they capture the potential discontinuity of the probability density in the angular variable which in 1D is well-located. If however, a Laplace-Beltrami operator is used instead of the standard integral scattering operator, i.e. scattering is extremely forward-peaked \cite{Pom92}, these half moment approximations fail \cite{Frank07,Schneider2014}.
 
 To improve this situation a new model with mixed moments was proposed in \cite{Frank07,Schneider2014} which is able to avoid this problem. Instead of choosing full or half moments, a mixture of both is used. Contrary to a typical half moment approximation, the lowest order moment (density) is kept as a full moment while all higher moments are averaged over half-spaces. This ensures the continuity of the underlying distribution function.
 
 Since in one spatial dimension mixed moments perform very well, it seems reasonable to extend them to multi-$D$. It turns out, that here a mixture of full, half and quarter moments is necessary to derive the correct mixed-moment ansatz.
 
 Realizability is the fact that a vector of moments is physically relevant, i.e. that it is the moment of a non-negative distribution function. While in 1D realizability theory for full moments \cite{Curto1991} and mixed moments \cite{Schneider2014} is completely solved, it remains an open problem in higher dimensions. We will give a realizability theory for quarter and mixed moments of order $1$ and derive, as in 1D, a corresponding Kershaw closure which provides an analytically closed system of equations, in contrast to minimum-entropy models which requires the solution of a nonlinear system of equations.

The paper is organized as follows: First, we will give a short introduction to the method of moments and its consequences for the Fokker-Planck equation. After setting up the basic notations in \secref{sec:macroscopic}, we provide necessary and sufficient conditions of order $1$ for realizability in the case of quarter moments and mixed moments in \secref{sec:Realizability}. \secref{sec:Kershaw} deals with the construction of Kershaw closures. Finally, we present some numerical tests for the derived models in \secref{sec:results} as well as conclusions and outlook in \secref{sec:conclusions}.
%%%%%%%%%%%%%%%%%%%%%%
\section{Macroscopic Models}
%%%%%%%%%%%%%%%%%%%%%%
\label{sec:macroscopic}
We consider the Fokker-Planck equation
\begin{subequations}
\label{eq:FokkerPlanckEasy}
\begin{align}
\dt\distribution + \SC\cdot\Dx\distribution + \absorption\distribution = \frac{\scattering}{2}\LaplaceBeltrami \distribution + \source
\end{align}
which describes the densities of particles with speed $\SC\in\sphere$ at position $\spatialVariable\in\Domain\subseteq\R^3$ and time $\timevar$ under the events of scattering (proportional to $\scattering\left(\timevar,\spatialVariable\right)$), absorption (proportional to $\absorption\left(\timevar,\spatialVariable\right)$) and emission (proportional to $\source\left(\timevar,\spatialVariable,\SC\right)$). The equation is supplemented with initial condition and Dirichlet boundary conditions:
\begin{align}
\distribution(0,\spatialVariable,\SC) &= \distributiontzero(\spatialVariable,\SC) &\text{for } \spatialVariable\in\Domain, \SC\in\sphere\\
\distribution(\timevar,\spatialVariable,\SC) &= \distributionboundary(\timevar,\spatialVariable,\SC) &\text{for } \timevar\in\timeint, \spatialVariable\in\partial\Domain, \outernormal\cdot\SC<0
\end{align}

where $\outernormal$ is the outward unit normal vector in $\spatialVariable\in\partial\Domain$.
\end{subequations}

Similar to \cite{Seibold2012} we assume that geometry, initial and boundary conditions are independent of the $\z$-direction, resulting in a solution $\distribution$ which is also $\z$-independent. Therefore \eqref{eq:FokkerPlanckEasy} can be reduced to $\spatialVariable\in\Domain\subseteq\R^2$. Parametrizing $\SC$ in spherical coordinates and taking the symmetry reduction into account we obtain
\begin{align}
\label{eq:SphericalCoordinates}
\SC = \left(\sqrt{1-\SCheight^2}\cos(\SCangle),\sqrt{1-\SCheight^2}\sin(\SCangle)\right)^T =: \left(\SCx,\SCy\right)^T
\end{align}
where $\SCangle\in[0,2\pi]$ is the azimuthal and $\SCheight\in[-1,1]$ the cosine of the polar angle. Then the Laplace-Beltrami operator on the unit sphere can be written as 
\begin{align}
\label{eq:LaplaceBeltrami}
\LaplaceBeltrami \distribution = \cfrac{d}{d\SCheight}\left(\left(1-\SCheight^2\right)\cfrac{d\distribution}{d\SCheight}\right) + \cfrac{1}{1-\SCheight^2}\cfrac{d^2\distribution}{d\SCangle^2}
\end{align}

\begin{definition}
The vector of functions $\basis:\sphere\to\R^{\momentnumber}$ consisting of $\momentnumber$ basis functions $\basiscomp[i]$, $\basisind=0,\ldots\momentnumber-1$ of maximal \emph{order} $\momentorder$ (in $\SC$) is called an \emph{angular basis}.

The so-called \emph{moments} $\moments=\left(\momentcomp{0},\ldots,\momentcomp{\momentnumber-1}\right)^T$ of a given distribution function $\distribution$ are then defined by
\begin{align}
\label{eq:moments}
\moments = \int\limits_{\sphere} {\basis}\distribution~d\SC =: \ints{\basis\distribution}
\end{align}
where the integration is performed component-wise.
\end{definition}

Equations for $\moments$ can then be obtained by multiplying \eqref{eq:FokkerPlanckEasy} with $\basis$ and integration over $\sphere$: 
\begin{align*}
\ints{\basis\dt\distribution}+\ints{\basis\Dx\cdot\SC\distribution} + \ints{\basis\absorption\distribution} = \scattering\ints{\basis\collision{\distribution}}+\ints{\basis\source}
\end{align*}
Collecting known terms, and interchanging integration and differentiation where possible, the moment system has the form
\begin{align}
\label{eq:MomentSystemUnclosed}
\dt\moments+\dx\ints{\SCx \basis\distribution}+\dy\ints{\SCy \basis\distribution}  + \absorption\moments = \cfrac{\scattering}{2}\ints{\basis\LaplaceBeltrami\distribution}+\ints{\basis\source}
\end{align}

Depending on the choice of $\basis$ the terms $\ints{\SCx \basis\distribution}$, $\ints{\SCy \basis\distribution}$ and in some cases even $\ints{\basis\collision{\distribution}}$ cannot be given explicitly in terms of $\moments$. Therefore an ansatz $\ansatz$ has to be made for $\distribution$ closing the unknown terms. This is called the \emph{moment-closure problem}.

In this paper the ansatz density $\ansatz$ is reconstructed from the moments $\moments$ by minimizing the entropy-functional 
 \begin{align}
 \label{eq:entropyFunctional}
 \entropyFunctional(\distribution) = \ints{\entropy(\distribution)}
 \end{align}
 under the moment constraints
 \begin{align}
 \label{eq:MomentConstraints}
 \ints{\basis\distribution} = \moments.
 \end{align}
The kinetic entropy density $\entropy:\R\to\R$ is strictly convex and twice continuously differentiable and the minimum is simply taken over all functions $\distribution = \distribution(\SC)$ such that 
  $\entropyFunctional(\distribution)$ is well defined. The obtained ansatz $\ansatz = \ansatz[\moments]$, solving this constrained optimization problem, is given by
 \begin{equation}
  \ansatz[\moments] = \argmin\limits_{\distribution:\entropy(\distribution)\in\Lp{1}}\left\{\ints{\entropy(\distribution)}
  : \ints{\basis \distribution} = \moments \right\}.
 \label{eq:primal}
 \end{equation}
This problem, which must be solved over the space-time mesh, is typically solved through its strictly convex finite-dimensional dual,
 \begin{equation}
  \multipliers(\moments) := \argmin_{\tilde{\multipliers} \in \R^{\momentnumber}} \ints{\ld{\entropy}(\basis^T 
   \tilde{\multipliers})} - \moments^T \tilde{\multipliers},
 \label{eq:dual}
 \end{equation}
where $\ld{\entropy}$ is the Legendre dual of $\entropy$. The first-order necessary conditions for the multipliers $\multipliers(\moments)$ show that the solution to \eqref{eq:primal} has the form
 \begin{equation}
  \ansatz[\moments] = \ld{\entropy}' \left(\basis^T \multipliers(\moments) \right)
 \label{eq:psiME}
 \end{equation}
where $\ld{\entropy}'$ is the derivative of $\ld{\entropy}$.\\

This approach is called the \emph{minimum-entropy closure} \cite{Levermore1996}.

The kinetic entropy density $\entropy$ can be chosen according to the 
physics being modelled.
As in \cite{Levermore1996,Hauck2010}, Maxwell-Boltzmann entropy%
 \begin{align}
 \label{eq:EntropyM}
  \entropy(\distribution) = \distribution \log(\distribution) - \distribution
 \end{align}
is used, thus $\ld{\entropy}(p) = \ld{\entropy}'(p) = \exp(p)$. This entropy is used for non-interacting particles as in an ideal gas or an ensemble of photons.

A closed system of equations for $\moments$ remains after substituting $\distribution$ in \eqref{eq:MomentSystemUnclosed} with $\ansatz[\moments]$:
\begin{align}
\label{eq:MomentSystemClosed}
\dt\moments+\dx\ints{\SCx \basis\ansatz[\moments]}+\dy\ints{\SCy \basis\ansatz[\moments]}  + \absorption\moments = \cfrac{\scattering}{2}\ints{\basis\LaplaceBeltrami\ansatz[\moments]}+\ints{\basis\source}
\end{align}

Also note that using the entropy $\entropy(\distribution) = \frac12\distribution^2$ the linear ansatz 
\begin{align}
\label{eq:PnAnsatz}
\ansatz[\moments] = \basis^T \multipliers(\moments)
\end{align}
remains. If the angular basis is chosen as spherical harmonics of order $\momentorder$, \eqref{eq:MomentSystemClosed} turns into the classical $\PN$ model \cite{Blanco1997,Brunner2005,Seibold2012}.
\subsection{Angular bases}
This moment approach strongly depends on the choice of the ansatz $\ansatz$ and the angular basis $\basis$. In the following sections we will shortly derive the different angular bases for the models presented here. These bases will be generally collected in the basis-vector $\basis(\SC)$. If we need to further distinguish between the models, the corresponding symbols defined in the following sections will be used. If a result is independent on the choice of the basis we will just use $\basis$ as symbol.

\subsubsection{Full moments}
The full-moment basis $\fmbasis$ of order $\momentorder$ consists of the tensorial powers of $\SC$, i.e. (by abusing notation)
\begin{align}
\fmbasis = \left(1,\SC,\SC\otimes\SC,\SC^{\otimes 3},\ldots,\SC^{\otimes\momentorder}\right)^T.
\end{align}

The corresponding tensorial moment of order $\basisind$ is given by 
\begin{align}
\momentstens{\basisind} = \ints{\SC^{\otimes \basisind}\distribution},
\end{align}
which consists of the scalar moments
\begin{align}
\momentcomptens{\basisindx}{\basisindy} = \ints{\SCx^{\basisindx}\SCy^{\basisindy}\distribution},\quad\quad \basisindx,\basisindy\geq 0, \basisindx+\basisindy=\basisind.
\end{align}

Note that an equivalent system can be obtained by using the corresponding real-valued spherical harmonics of order $\momentorder$ as angular basis \cite{Blanco1997,Brunner2005,Seibold2012}.

\subsubsection{Quarter moments}
We follow the approach in \cite{Frank2006} where general partial as well as the special case of quarter moments in two space-dimensions are treated. The main idea is not to integrate over the whole sphere $\sphere$ but over subsets of it.

\begin{definition}
For $\spheresubset\subseteq \sphere$ and $\distribution\in \Lp{2}(\spheresubset,\R)$ we define its tensorial moment by
\begin{align}
\momentstens[\spheresubset]{\basisind} = \ints{\indicator{\spheresubset}\SC^{\otimes\basisind}\distribution} =: \intA{\SC^{\otimes\basisind}\distribution}{\spheresubset}.
\end{align}
As for full moments the corresponding components of the tensorial moments are given by 
\begin{align}
\momentcomptens[\spheresubset]{\basisindx}{\basisindy} = \intA{\SCx^{\basisindx}\SCy^{\basisindy}\distribution}{\spheresubset},\quad\quad \basisindx,\basisindy\geq 0, \basisindx+\basisindy=\basisind.
\end{align}

In this paper, $\spheresubset$ will be one of the following quarterspaces
 \begin{align*}
\Smp &= \left\{\SC~|~\SCheight\in[-1,1], \SCangle\in[\frac{\pi}{2},\pi]\right\} ,~&\Spp &= \left\{\SC~|~\SCheight\in[-1,1], \SCangle\in[0,\frac{\pi}{2}]\right\},\\
\Smm &= \left\{\SC~|~\SCheight\in[-1,1], \SCangle\in[\pi,\frac{3\pi}{2}]\right\},~&\Spm &= \left\{\SC~|~\SCheight\in[-1,1], \SCangle\in[\frac{3\pi}{2},2\pi]\right\},\\
\intertext{or halfspaces}
\Sxp &= \left\{\SC~|~\SCheight\in[-1,1], \SCangle\in[-\frac{\pi}{2},\frac{\pi}{2}]\right\},&\Sxm &= \left\{\SC~|~\SCheight\in[-1,1], \SCangle\in[\frac{\pi}{2},\frac{3\pi}{2}]\right\},\\
\Syp &= \left\{\SC~|~\SCheight\in[-1,1], \SCangle\in[0,\pi]\right\},&\Sym &= \left\{\SC~|~\SCheight\in[-1,1], \SCangle\in[\pi, 2\pi]\right\}.
 \end{align*}
 Note that for $\spheresubset = \sphere$ we have that $\momentstens[\spheresubset]{\basisind}=\momentstens{\basisind}$.\\
\end{definition}

For pure quarter moments, we will have $\spheresubset=\Sij$ for $i,j\in\{+,-\}$. The corresponding basis for quadrant $ij$ is then given by
\begin{align*}
\pmbasis[\Sij] = \indicator{\Sij}\cdot(1,\SC,\SC\otimes\SC,\SC^{\otimes 3},\ldots,\SC^{\otimes\momentorder})^T
\end{align*}
where $\cdot$ should be understood as multiplication with every component. Consequently the complete set of basis functions is $\pmbasis = \left(\pmbasis[\Spp],\pmbasis[\Smp],\pmbasis[\Smm],\pmbasis[\Spm]\right)$.

\subsubsection{Mixed moments}
As will be shown below the quarter-moment basis exhibits undesired properties when applied to the Laplace-Beltrami operator. This has inspired the works in \cite{Frank07,Schneider2014} where so-called mixed moments were developed. The main problem of the quarter-moment basis is that the ansatz $\ansatz$ is not continuous in $\SC$ which is necessary for the solution of \eqref{eq:FokkerPlanckEasy} in one space-dimension. There, mixed moments where constructed by starting from a half-moment ansatz and demanding continuity of the ansatz \eqref{eq:psiME} with respect to this basis. There, it suffices to choose a full zeroth-order moment and half-moments for all higher order moments \cite{Schneider2014}.

The construction of mixed moments in two dimensions works in the same spirit. We start with the general quarter-moment basis $\pmbasis$ and demand continuity of the ansatz $\ansatz[\moments]$.

Having \eqref{eq:psiME} in mind, we obtain multipliers $\multipliers[ij]$ for every quadrant $\Sij$. For example, for $\momentorder = 1$ we have 

\begin{align*}
\ansatz[\moments]\left|_{\Sij}\right. = \ld{\entropy}'\left(\multiplierscomptens[\Sij]{0}{0}+\multiplierstens[\Sij]{1}\cdot \SC\right).
\end{align*}
At the poles of the sphere ($\SCheight = \pm 1$) we have $\SC = \left(0,0\right)^T$, which implies that $\ansatz[\moments]$ is continuous only if $\multiplierscomptens[\Sij]{0}{0} = \multiplierscomptens{0}{0}$ for all $i,j\in\{+,-\}$. Similarly it holds that along the quarter-space boundaries (i.e. $\SCangle = k\frac{\pi}{2}$, $k =0,\ldots,3$) some of the multipliers have to be the same (exactly those whose component of $\SC$ does not vanish on the boundary, e.g. $\multiplierscomptens[\Spp]{1}{0} = \multiplierscomptens[\Spm]{1}{0}$ since at $\SCangle = 0$ we have $\SCx = 1\neq 0$).

Accordingly we obtain the moments
\begin{align}
\momentcomptens{0}{0} &= \ints{\distribution} & \momentcomptens[\spheresubset]{\basisindx}{\basisindy} &= \intA{\SCx\SCy\distribution}{\spheresubset}\\
\momentcomptens[\Sxp]{\basisind}{0} &=\intsxp{\SCx\distribution} & \momentcomptens[\Sxm]{\basisind}{0} &=\intsxm{\SCx\distribution}\\
\momentcomptens[\Syp]{0}{\basisind} &=\intsyp{\SCy\distribution} & \momentcomptens[\Sym]{0}{\basisind} &=\intsym{\SCy\distribution}
\end{align}
for  $\basisind = 1,\ldots,\momentorder$, $\basisindx+\basisindy= \basisind$, $\basisindx,\basisindy\geq 0$, and $\spheresubset\in\{\Spp,\Smp,\Smm,\Spm\}$.

In contrast to the one-dimensional setting one full moment, half moments for the basis functions contributing to either $\x$ or $\y$ direction, and quarter moments for the basis functions which contribute to both directions occur.

Note that in the fully three-dimensional setting the decomposition in $\z$-direction has to be taken into account, leading to octants instead of quadrants.

To embed this in the framework we choose our basis function $\mmbasis$ of order $\momentorder$ as
\begin{align}
\nonumber\mmbasis = (1,&\SCx\indicator{\Sxp},\ldots,\SCx^\momentorder\indicator{\Sxp},\SCx\indicator{\Sxm},\ldots,\SCx^\momentorder\indicator{\Sxm},\\
\nonumber&\SCy\indicator{\Syp},\ldots,\SCy^\momentorder\indicator{\Syp},\SCy\indicator{\Sym},\ldots,\SCy^\momentorder\indicator{\Sym},\\
\nonumber&\SCx\SCy\indicator{\Spp},\SCx^2\SCy\indicator{\Spp},\SCx\SCy^2\indicator{\Spp},\ldots,\SCx\SCy^{\momentorder-1}\indicator{\Spp})\\
&\hskip 4cm\vdots\\
\nonumber&\SCx\SCy\indicator{\Spm},\SCx^2\SCy\indicator{\Spm},\SCx\SCy^2\indicator{\Spm},\ldots,\SCx\SCy^{\momentorder-1}\indicator{\Spm})^T\\
\nonumber=: (\mmbasis[\sphere],&\mmbasis[\Sxp],\mmbasis[\Sxm],\mmbasis[\Syp],\mmbasis[\Sym],\mmbasis[\Spp],\mmbasis[\Smp],\mmbasis[\Smm],\mmbasis[\Spm])^T.
\end{align}

Formulas to efficiently calculate the appearing integrals in case of a linear ansatz can be found in Appendix \ref{sec:CalculationPMintegrals}.

\subsection{Moments of the Laplace-Beltrami operator}
All that remains to obtain a closed set of equations in \eqref{eq:MomentSystemClosed} is to correctly evaluate $\ints{\basis\LaplaceBeltrami\ansatz[\moments]}$. This can be done using the formal self-adjointness of the Laplace-Beltrami operator, using
\begin{align*}
\ints{\basis \LaplaceBeltrami \distribution} = \ints{\distribution\LaplaceBeltrami\basis }.
\end{align*} 
Due to this, the calculation of these integrals does not a priori depend on the choice of the ansatz but is true for every $\distribution$.

\subsubsection{Full moments $\fmbasis$-basis}
Straight-forward calculations show that for $\basisindx,\basisindy\geq 0$, $\basisindx+\basisindy = \basisind$ it holds that 
\begin{align}
\LaplaceBeltrami \SCx^{\basisindx}\SCy^{\basisindy} = -\basisind\left(\basisind+1\right)\SCx^{\basisindx}\SCy^{\basisindy}+\basisindx\left(\basisindx-1\right)\SCx^{\basisindx-2}\SCy^{\basisindy}+\basisindy\left(\basisindy-1\right)\SCx^{\basisindx}\SCy^{\basisindy-2}.
\end{align}
Consequently, the corresponding moments are given by
\begin{align*}
\ints{\SCx^{\basisindx}\SCy^{\basisindy}\LaplaceBeltrami\distribution} = -\basisind\left(\basisind+1\right)\momentcomptens{\basisindx}{\basisindy}+\basisindx\left(\basisindx-1\right)\momentcomptens{\basisindx-2}{\basisindy}+\basisindy\left(\basisindy-1\right)\momentcomptens{\basisindx}{\basisindy-2}.
\end{align*}

\subsubsection{Quarter moments $\pmbasis$-basis}
We observe the following relations on the quadrant $\Spp$ for $\basisindx,\basisindy> 0$, $\basisindx+\basisindy= \basisind$:
\begin{align}
 \label{eq:QMLaplaceBeltramizerothmoment}
\LaplaceBeltrami\indicator{\Spp} &= \cfrac{\dirac^\prime\!\left(\SCangle\right)+\dirac^\prime\!\left(\frac{\pi}{2}-\SCangle\right)}{1-\SCheight^2}\\
\LaplaceBeltrami\indicator{\Spp}\SCx^{\basisindx} &= \sqrt{1-\SCheight^2}^{\basisindx-2}\dirac^\prime\!\left(\SCangle\right) + \basisindx(\basisindx-1)\SCx^{\basisindx-2}\indicator{\Spp} -\basisindx(\basisindx+1)\SCx^{\basisindx}\indicator{\Spp} \\
\nonumber
\LaplaceBeltrami\indicator{\Spp}\SCx^{\basisindy} &= \sqrt{1-\SCheight^2}^{\basisindy-2}\dirac^\prime\!\left(\frac{\pi}{2}-\SCangle\right) + \basisindy(\basisindy-1)\SCy^{\basisindy-2}\indicator{\Spp} -\basisindy(\basisindy+1)\SCy^{\basisindy}\indicator{\Spp} \\
\nonumber
\LaplaceBeltrami \indicator{\Spp}\SCx^{\basisindx}\SCy^{\basisindy} &= \indicator{\Spp}\left(-\basisind\left(\basisind+1\right)\SCx^{\basisindx}\SCy^{\basisindy}+\basisindx\left(\basisindx-1\right)\SCx^{\basisindx-2}\SCy^{\basisindy}+\basisindy\left(\basisindy-1\right)\SCx^{\basisindx}\SCy^{\basisindy-2}\right)\\
\nonumber
&+\cfrac{\basisindy\SCx^{\basisindx+1}\SCy^{\basisindy-1}+\basisindx\SCx^{\basisindx-1}\SCy^{\basisindy+1}}{1-\SCheight^2}\left(\dirac\left(\SCangle-\cfrac{\pi}{2}\right)-\dirac\left(\SCangle\right)\right).
\end{align}
Similarly, the corresponding quantities in the other quarter-spaces can be obtained.

The moments of $\LaplaceBeltrami\distribution$ include the evaluation of the microscopic values $\partial_\SCangle \distribution$ and $\distribution$ at the quarter-sphere boundaries. 
Note that, similar to the one-dimensional case, the mass-conservation property of the Laplace-Beltrami operator (i.e. $\ints{\LaplaceBeltrami\distribution}=0$) is usually violated. This can be easily seen by summing up \eqref{eq:QMLaplaceBeltramizerothmoment} over all quadrants, observing that $\de_\SCangle\distribution$ at the angles $\frac{k\pi}{2}$, $k=0,\ldots,3$ remains. Also note that this quantity is not rigorously defined for minimum-entropy closures due to the discontinuity in the ansatz $\ansatz$.

\subsubsection{Mixed moments $\mmbasis$-basis}
\label{sec:MixedMomentsLB}
We observe for $\basisindx,\basisindy> 0$ the following:
\begin{align*}
\LaplaceBeltrami 1 &= 0\\
\LaplaceBeltrami \indicator{\spheresubset}\SCx^{\basisindx} &= -\basisindx\left(\basisindx+1\right) \indicator{\spheresubset}\SCx^{\basisindx} +\basisindx\left(\basisindx-1\right) \indicator{\spheresubset}\SCx^{\basisindx-2}\\&~\quad\pm \indicator{\basisindx=1}\frac{\dirac\!\left(\SCangle - \frac{\pi}{2}\right) + \dirac\!\left(\SCangle-\frac{3\pi}{2}\right)}{\sqrt{1 - {\mu}^2}} &\text{ for } \spheresubset\in\{\Sxp,\Sxm\}\\
\LaplaceBeltrami \indicator{\spheresubset}\SCy^{\basisindy} &= -\basisindy\left(\basisindy+1\right) \indicator{\spheresubset}\SCy^{\basisindy} +\basisindy\left(\basisindy-1\right) \indicator{\spheresubset}\SCy^{\basisindy-2}\\&~\quad\pm \indicator{\basisindy=1}\frac{\dirac\!\left(\SCangle\right) + \dirac\!\left(\SCangle-\pi\right)}{\sqrt{1 - {\mu}^2}} &\text{ for } \spheresubset\in\{\Syp,\Sym\}.\\
\end{align*}
The calculations for $\LaplaceBeltrami \indicator{\Sij}\SCx^{\basisindx}\SCy^{\basisindy}$ are equivalent to those of the quarter-moment basis.

Note that all these calculations are closure-independent. We therefore need to calculate $\momentcomptens[\Sxpm]{0}{0}$, $\momentcomptens[\Sypm]{0}{0}$, $\momentcomptens[\Sij]{0}{0}$, $\momentcomptens[\Sij]{\momentorder}{0}$ ,$\momentcomptens[\Sij]{0}{\momentorder}$ and the semi-microscopic quantities $\int_{-1}^1\ansatz\left(\SCheight,k\frac{\pi}{2}\right)~d\SCheight$ for $k=0,\ldots 3$.

\section{Realizability}
\label{sec:Realizability}
In this section we will define precisely the concept of realizability. Furthermore we give necessary and sufficient conditions for first order quarter and mixed moment models.
\begin{definition}[Realizability]
A moment vector $\moments$ is said to be \emph{realizable} with respect to basis $\basis$ if there exists a non-negative distribution $\distribution(\SC)\geq 0$ such that $\moments = \ints{\basis\distribution}$. The \emph{realizable set} is defined as
\begin{align}
\label{eq:RealizableSet}
\RD{\basis}{} = \{\moments\in\R^\momentnumber~|~ \exists\distribution\geq 0 \text{ s.t. } \moments = \ints{\basis\distribution}\}
\end{align}
$\distribution$ is then called a \emph{realizing distribution}.
\end{definition}

Note that \eqref{eq:dual} is solvable if and only if $\moments\in\RD{\basis}{}$.\\

A standard example for realizability are the realizability conditions of first order in the full-moment setting.

\begin{example}
Let $\basis = \fmbasis[1] = \left(1,\Omega\right)^T$ and $\moments = \left(\momentcomptens{0}{0},\momentstens{1}\right)^T$. Then $\moments\in\RD{\fmbasis[1]}{}$ if and only if \cite{Ker76}
\begin{align}
\norm{\momentstens{1}}{2} \leq \momentcomptens{0}{0}.
\end{align}

\end{example}

\subsection{Quarter moments}
In this section first order necessary and sufficient conditions for realizability of a quarter-moment vector will be given. This will be used later to derive the corresponding conditions for mixed moments.
\begin{lemma}
\label{lem:QMFORealizability}
For a vector of moments $\moments = \left(\momentcomptens[\Sij]{0}{0},\momentstens[\Sij]{1}\right)^T\in\R^3$ it is necessary and sufficient for the existence of a non-negative measure $\distribution$ which realizes $\moments$ with respect to $\pmbasis[\Sij]$ that
\begin{align}
\label{eq:QuarterMomentsRealizable}
\norm{\momentstens[\Sij]{1}}{2}\leq \momentcomptens[\Sij]{0}{0}
\end{align}
and the normalized first moment 
\begin{align}
\normalizedmomentstens[\Sij]{1} := \frac{\momentstens[\Sij]{1}}{\momentcomptens[\Sij]{0}{0}}\label{eq:NormalizedMomentsQM1}
\end{align}
satisfies $\normalizedmomentstens[\Sij]{1} \in\Sij$.
\end{lemma}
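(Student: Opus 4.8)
The plan is to treat this as a moment problem restricted to the subdomain $\Sij$, proving the two implications separately, and the key observation I would use is that, once expressed through the coordinates $(\SCx,\SCy)$, the quarterspace $\Sij$ is nothing but the closed quarter-disk $\{(\SCx,\SCy)\in\R^2 : \SCx^2+\SCy^2\le 1 \text{ and the azimuthal angle of }(\SCx,\SCy)\text{ lies in the angular range of }\Sij\}$; in particular $\Sij$ is then a \emph{convex} planar set equal to its own convex hull, and every direction obeys $\norm{\SC}{2}=\sqrt{1-\SCheight^2}\le 1$. I would also recall that $\pmbasis[\Sij]=\indicator{\Sij}\cdot(1,\SCx,\SCy)^T$, so that realizability of $\moments$ with respect to $\pmbasis[\Sij]$ means exactly that there is a non-negative measure $\distribution$ with $\momentcomptens[\Sij]{0}{0}=\intsij{\distribution}$ and $\momentstens[\Sij]{1}=\intsij{\SC\distribution}$.

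For necessity, given such a $\distribution\ge 0$, I would first note $\momentcomptens[\Sij]{0}{0}=\intsij{\distribution}\ge 0$, then use the triangle inequality for vector-valued integrals together with $\norm{\SC}{2}\le 1$ to get $\norm{\momentstens[\Sij]{1}}{2}\le\intsij{\norm{\SC}{2}\distribution}\le\intsij{\distribution}=\momentcomptens[\Sij]{0}{0}$, which is \eqref{eq:QuarterMomentsRealizable}. If $\momentcomptens[\Sij]{0}{0}>0$ I would observe that $\distribution/\momentcomptens[\Sij]{0}{0}$ is a probability measure supported in $\Sij$, so its barycenter $\normalizedmomentstens[\Sij]{1}$ lies in $\Sij$, since a closed convex set contains the barycenter of every probability measure supported in it. If $\momentcomptens[\Sij]{0}{0}=0$ then \eqref{eq:QuarterMomentsRealizable} already forces $\momentstens[\Sij]{1}=0$ and the second condition is vacuous.

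For sufficiency I would construct an explicit (two-atom) realizing measure. If $\momentcomptens[\Sij]{0}{0}=0$, then $\momentstens[\Sij]{1}=0$ by \eqref{eq:QuarterMomentsRealizable} and $\distribution\equiv 0$ works. If $\momentcomptens[\Sij]{0}{0}>0$, set $\rho:=\norm{\normalizedmomentstens[\Sij]{1}}{2}\in[0,1]$; choosing a pole $\SC_0$ (where $\SCheight=\pm1$, hence $\SC=0$, and which lies in $\Sij$) and, when $\rho>0$, the unit vector $\SC_\ast:=\normalizedmomentstens[\Sij]{1}/\rho$ (which lies on the equator $\SCheight=0$ with azimuthal angle in the range of $\Sij$, hence $\SC_\ast\in\Sij$, precisely because $\normalizedmomentstens[\Sij]{1}\in\Sij$), I would take
\begin{align*}
\distribution = \momentcomptens[\Sij]{0}{0}\Bigl(\rho\,\dirac_{\SC_\ast}+(1-\rho)\,\dirac_{\SC_0}\Bigr)
\end{align*}
(interpreting this as $\momentcomptens[\Sij]{0}{0}\,\dirac_{\SC_0}$ when $\rho=0$). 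Then $\distribution\ge 0$, $\support{\distribution}\subseteq\Sij$, $\intsij{\distribution}=\momentcomptens[\Sij]{0}{0}$, and $\intsij{\SC\distribution}=\momentcomptens[\Sij]{0}{0}\rho\,\SC_\ast=\momentcomptens[\Sij]{0}{0}\normalizedmomentstens[\Sij]{1}=\momentstens[\Sij]{1}$, so $\distribution$ realizes $\moments$.

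The substantive content is exactly the same one-line input as in the first-order full-moment example recalled above, namely $\norm{\SC}{2}\le 1$, now combined with convexity of $\Sij$ viewed as a subset of $\R^2$. The only steps requiring care — and I expect these to be the main (though minor) obstacle — are the bookkeeping needed to identify $\Sij$, under $\SC\mapsto(\SCx,\SCy)$, with the convex quarter-disk (so that the barycenter argument applies on the necessity side and the atom $\SC_\ast$ genuinely lies in $\Sij$ on the sufficiency side), and the degenerate case $\momentcomptens[\Sij]{0}{0}=0$, in which $\normalizedmomentstens[\Sij]{1}$ is undefined and the stated conditions must be read with the convention above.
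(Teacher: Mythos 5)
Your proof is correct, and it follows the same overall template as the paper's --- bound $\norm{\momentstens[\Sij]{1}}{2}$ via $\norm{\SC}{2}\le 1$ for necessity, exhibit an explicit atomic realizing measure for sufficiency --- but both halves differ in their details. On the necessity side, the paper deduces $\normalizedmomentstens[\Sij]{1}\in\Sij$ by a purely sign-based argument ($\SCx,\SCy\ge 0$ on $\Spp$ gives $\momentcomptens[\Spp]{1}{0},\momentcomptens[\Spp]{0}{1}\ge 0$, which together with the norm bound puts the normalized moment in the quarter-disk), whereas you use the convexity of the quarter-disk and the barycenter property of probability measures; both are elementary, but yours generalizes more readily to partial regions that are not sign-defined. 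On the sufficiency side, the paper uses a single Dirac placed at the (generally interior) point $\normalizedmomentstens[\Sij]{1}$, relying implicitly on the convention that $\diracmd$ has mass~1 even at the boundary of integration and that the parametrization $\SC=(\SCx,\SCy)$ covers the disk; you instead split the mass between an atom at a pole (where $\SC=0$) and an atom on the equator in the direction of $\normalizedmomentstens[\Sij]{1}$, which avoids any ambiguity about points in the interior of the disk. You also dispose of the degenerate case $\momentcomptens[\Sij]{0}{0}=0$ explicitly, which the paper leaves implicit. In short: same skeleton, different flesh, and your variant is if anything a little more careful about the corner cases.
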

\begin{proof}
We only prove the statement for $\Spp$. The proof for the other quadrants works similarly.\\
Assume that $\distribution\geq 0$ in $\Spp$. Since $\norm{\SC}{2}\leq 1$ we obtain
\begin{align*}
\norm{\momentstens[\Spp]{1}}{2} = \norm{\intpp{\SC\distribution}}{2} \leq \intpp{\norm{\SC}{2}\distribution}\leq \intpp{\distribution} = \momentcomptens[\Spp]{0}{0}
\end{align*}
showing the necessity of \eqref{eq:QuarterMomentsRealizable}. Since $\SC\in\Spp$ we obtain $\SCx,\SCy\geq 0$ implying that $\momentcomptens[\Spp]{1}{0},\momentcomptens[\Spp]{0}{1}\geq 0$. Together with \eqref{eq:QuarterMomentsRealizable} we have $\normalizedmomentstens[\Spp]{1} \in\Spp$.

To show the sufficiency of \eqref{eq:QuarterMomentsRealizable} we give a realizing distribution function. A possible (but not necessarily unique) candidate is given by
\begin{align}
\label{eq:psiQM}
\distribution[\Spp] = \momentcomptens[\Spp]{0}{0}\diracmd\left(\SC-\normalizedmomentstens[\Spp]{1}\right)
\end{align} 
where $\diracmd$ denotes the multi-dimensional Dirac-delta distribution\footnote{We assume for notational simplicity that $\diracmd$ has mass $1$ even on the boundary of integration.}. If $\normalizedmomentstens[\Spp]{1}\in\Spp$ the distribution function is supported in $\Spp$. Thus
\begin{align*}
\ints{\distribution[\Spp]} = \momentcomptens[\Spp]{0}{0}\quand \ints{\SC\distribution[\Spp]} = \momentcomptens[\Spp]{0}{0}\normalizedmomentstens[\Spp]{1} \stackrel{\eqref{eq:NormalizedMomentsQM1}}{=} \momentstens[\Spp]{1}.
\end{align*}
Therefore, $\distribution[\Spp]$ is a realizing distribution for $\moments$ under the given assumptions.
\end{proof}
\subsection{Mixed moments}
With the knowledge of \lemref{lem:QMFORealizability} we are able to provide the realizability conditions for mixed moments of order $1$. ‚\\
\begin{theorem}[First order necessary and sufficient conditions]
\label{thm:NecSuffFOMM}
For a vector of moments $$\moments = (\momentcomptens{0}{0},\momentcomptens[\Sxp]{1}{0},\momentcomptens[\Sxm]{1}{0},\momentcomptens[\Syp]{0}{1},\momentcomptens[\Sym]{0}{1})^T\in\R^5$$ it is necessary and sufficient for the existence of a non-negative measure $\distribution$ which realizes $\moments$ with respect to $\mmbasis[1]$ that
\begin{align}
\label{eq:NecSuffFOMM}
\sqrt{\left(\momentcomptens[\Sxp]{1}{0}-\momentcomptens[\Sxm]{1}{0}\right)^2 + \left(\momentcomptens[\Syp]{0}{1}-\momentcomptens[\Sym]{0}{1}\right)^2}\leq \momentcomptens{0}{0}
\end{align}
and $\momentcomptens[\Sxp]{1}{0},\momentcomptens[\Syp]{0}{1},-\momentcomptens[\Sxm]{1}{0},-\momentcomptens[\Sym]{0}{1}\geq 0$.
\end{theorem}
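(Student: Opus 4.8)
The plan is to prove the two implications separately: necessity by a short direct estimate, and sufficiency by reducing to the first-order quarter-moment result \lemref{lem:QMFORealizability}.

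For necessity, suppose $\distribution\geq 0$ realizes $\moments$ with respect to $\mmbasis[1]$. Since $\SCx\geq 0$ on $\Sxp$, $\SCx\leq 0$ on $\Sxm$, and analogously for $\SCy$ on $\Syp$ and $\Sym$, the sign conditions $\momentcomptens[\Sxp]{1}{0},\momentcomptens[\Syp]{0}{1}\geq 0$ and $-\momentcomptens[\Sxm]{1}{0},-\momentcomptens[\Sym]{0}{1}\geq 0$ are immediate. For the norm bound I would use $\SCx=|\SCx|$ on $\Sxp$ and $\SCx=-|\SCx|$ on $\Sxm$ to write $\momentcomptens[\Sxp]{1}{0}-\momentcomptens[\Sxm]{1}{0}=\ints{|\SCx|\distribution}$ and, likewise, $\momentcomptens[\Syp]{0}{1}-\momentcomptens[\Sym]{0}{1}=\ints{|\SCy|\distribution}$; the left-hand side of \eqref{eq:NecSuffFOMM} is then the Euclidean length of the vector $\ints{(|\SCx|,|\SCy|)^T\distribution}$, and the triangle inequality for vector-valued integrals together with $\sqrt{\SCx^2+\SCy^2}=\sqrt{1-\SCheight^2}\leq 1$ yields
\begin{align*}
\sqrt{\ints{|\SCx|\distribution}^2+\ints{|\SCy|\distribution}^2}\leq\ints{\sqrt{\SCx^2+\SCy^2}\distribution}\leq\ints{\distribution}=\momentcomptens{0}{0}.
\end{align*}

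For sufficiency, the structural observation is that every half-space is the essentially disjoint union of two quadrants, $\Sxp=\Spp\cup\Spm$, $\Sxm=\Smp\cup\Smm$, $\Syp=\Spp\cup\Smp$, $\Sym=\Smm\cup\Spm$. Hence it suffices to produce, for each quadrant $\Sij$, a first-order quarter-moment vector $(\momentcomptens[\Sij]{0}{0},\momentstens[\Sij]{1})$ satisfying the hypotheses of \lemref{lem:QMFORealizability} for which $\sum_{ij}\momentcomptens[\Sij]{0}{0}=\momentcomptens{0}{0}$ and the first-moment components add up to the prescribed half-moments, i.e. $\momentcomptens[\Spp]{1}{0}+\momentcomptens[\Spm]{1}{0}=\momentcomptens[\Sxp]{1}{0}$, $\momentcomptens[\Smp]{1}{0}+\momentcomptens[\Smm]{1}{0}=\momentcomptens[\Sxm]{1}{0}$, $\momentcomptens[\Spp]{0}{1}+\momentcomptens[\Smp]{0}{1}=\momentcomptens[\Syp]{0}{1}$, $\momentcomptens[\Smm]{0}{1}+\momentcomptens[\Spm]{0}{1}=\momentcomptens[\Sym]{0}{1}$; then $\distribution:=\sum_{ij}\distribution[\Sij]$ with $\distribution[\Sij]$ the Dirac realizer \eqref{eq:psiQM} of quadrant $\Sij$ realizes $\moments$ (in the construction below the support points lie in the open quadrants, so no double-counting occurs across the half-space splits).

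The main obstacle, and the heart of the proof, is choosing this split economically: placing the mass on the coordinate axes would realize \eqref{eq:NecSuffFOMM} only under the stronger bound $M_1+M_2\leq\momentcomptens{0}{0}$, where $M_1:=\momentcomptens[\Sxp]{1}{0}-\momentcomptens[\Sxm]{1}{0}\geq 0$ and $M_2:=\momentcomptens[\Syp]{0}{1}-\momentcomptens[\Sym]{0}{1}\geq 0$, whereas \eqref{eq:NecSuffFOMM} only gives $N:=\sqrt{M_1^2+M_2^2}\leq\momentcomptens{0}{0}$. The remedy — assuming $M_1,M_2>0$, the cases $M_1=0$ or $M_2=0$ collapsing to a one-dimensional half-moment problem and being easier — is to align all four quarter-moment first moments with the line through $(M_1,M_2)$. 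Setting $\alpha:=\momentcomptens[\Sxp]{1}{0}/M_1$ and $\beta:=\momentcomptens[\Syp]{0}{1}/M_2$ (the four sign hypotheses are exactly what make $\alpha,\beta\in[0,1]$), I would pick $s\in[\max(0,\alpha+\beta-1),\min(\alpha,\beta)]$ — this interval being nonempty precisely because $\alpha,\beta\in[0,1]$ — put $\lambda_{++}=s$, $\lambda_{+-}=\alpha-s$, $\lambda_{-+}=\beta-s$, $\lambda_{--}=1-\alpha-\beta+s$ (all nonnegative, summing to $1$), and then define $\momentstens[\Sij]{1}:=\lambda_{ij}(\pm M_1,\pm M_2)^T$ with signs dictated by $\Sij$, $\momentcomptens[\Sij]{0}{0}:=\lambda_{ij}N$ for $ij\neq{++}$, and $\momentcomptens[\Spp]{0}{0}:=\momentcomptens{0}{0}-N(1-\lambda_{++})$. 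The remaining verification is routine: the four summation identities and $\sum_{ij}\momentcomptens[\Sij]{0}{0}=\momentcomptens{0}{0}$ hold by construction, $\norm{\momentstens[\Sij]{1}}{2}=\lambda_{ij}N\leq\momentcomptens[\Sij]{0}{0}$ (with equality for $ij\neq{++}$, and using $\momentcomptens{0}{0}\geq N$ for the $++$ quadrant), and each $\normalizedmomentstens[\Sij]{1}$ points along $(\pm M_1,\pm M_2)$ and so lies in $\Sij$; thus \lemref{lem:QMFORealizability} applies in every quadrant and the argument closes.
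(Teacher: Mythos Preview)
Your proof is correct and follows essentially the same route as the paper: the necessity argument is identical (the paper phrases it as testing $\ints{(1-\unitnormalvector^T\absc{\SC})\distribution}\geq 0$ against unit vectors $\unitnormalvector$, which is the dual form of your vector triangle inequality), and for sufficiency both reduce to \lemref{lem:QMFORealizability} by aligning the four quadrant first moments along $(\pm M_1,\pm M_2)$ and choosing convex weights $\lambda_{ij}$. The only substantive difference is in the bookkeeping of the split: the paper takes the particular weight $s=\alpha\beta$ (written bilinearly as $\lambda_{ij}=\convexscalar_2^{[i]}\convexscalar_1^{[j]}$ with $\convexscalar_2=\alpha$, $\convexscalar_1=\beta$) and gives each quadrant mass $\lambda_{ij}\momentcomptens{0}{0}$, so that all four Diracs sit at the common radius $N/\momentcomptens{0}{0}$, whereas you leave $s$ free and dump the slack $\momentcomptens{0}{0}-N$ into the $++$ quadrant; both are valid for the theorem, but the paper's symmetric version \eqref{eq:psiMM}--\eqref{eq:MM1Interpolation} is the one reused in \secref{sec:Kershaw} to construct the $\MKN[1]$ closure.
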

\begin{proof}
We start again with the necessity of \eqref{eq:NecSuffFOMM}:\\
Note that the vector of component-wise absolute values $\absc{\SC} := (\abs{\SCx},\abs{\SCy})$ satisfies $\norm{\absc{\SC}}{2} = \norm{\SC}{2} \leq 1$. Therefore $1-\unitnormalvector^T\absc{\SC}\geq 0$ for every unitvector $\unitnormalvector\in\sphere[1]$, implying
\begin{align*}
0\leq \ints{\left(1-\unitnormalvector^T\absc{\SC}\right)\distribution}.
\end{align*}
This can be reformulated to
\begin{align}
\label{eq:nuMMFirstOrder}
\unitnormalvector^T\widetilde{\moments}:=\unitnormalvector^T\begin{pmatrix}
\momentcomptens[\Sxp]{1}{0}-\momentcomptens[\Sxm]{1}{0}\\\momentcomptens[\Syp]{0}{1}-\momentcomptens[\Sym]{0}{1}
\end{pmatrix}\leq \momentcomptens{0}{0}
\end{align}
The left hand side will be extremal if $\unitnormalvector$ is collinear to $\widetilde{\moments}$, i.e. $\unitnormalvector = \frac{\widetilde{\moments}}{\norm{\widetilde{\moments}}{2}}$.
Thus \eqref{eq:nuMMFirstOrder} implies \eqref{eq:NecSuffFOMM}. The sign-constraints on the moments follow again from the signs of $\SCx$ and $\SCy$ in the corresponding halfspaces.
 
For sufficiency we give again a reproducing distribution. In the following we will make use that under \eqref{eq:NecSuffFOMM} the reduced moment vector $\widetilde{\moments}$ is located in $\momentcomptens{0}{0}\Spp$. Similarly the quantities
\begin{align}
\label{eq:phiij}
\underline{\normalizedmomentstens[\Sij]{1}} :=   \left(i\frac{\momentcomptens[\Sxp]{1}{0}-\momentcomptens[\Sxm]{1}{0}}{\momentcomptens{0}{0}},j\frac{\momentcomptens[\Syp]{0}{1}-\momentcomptens[\Sym]{0}{1}}{\momentcomptens{0}{0}}\right),\quad i,j\in\{+,-\}
\end{align}
will be in $\Sij$. Reformulating \eqref{eq:NecSuffFOMM} in terms of the normalized moments collected in $\normalizedmomentstens{1}$ we see that 
\begin{align}
\label{eq:NecSuffFOMMNorm}
\MMNorm{\normalizedmomentstens{1}} := \sqrt{\left(\normalizedmomentcomptens[\Sxp]{1}{0}-\normalizedmomentcomptens[\Sxm]{1}{0}\right)^2 + \left(\normalizedmomentcomptens[\Syp]{0}{1}-\normalizedmomentcomptens[\Sym]{0}{1}\right)^2}\leq 1
\end{align}
Also note that $\MMNorm{\normalizedmomentstens{1}}  = \norm{\underline{\normalizedmomentstens[\Sij]{1}} }{2}$. We now embed the moments $\underline{\normalizedmomentstens[\Sij]{1}}\in\R^2$ into the normalized mixed-moment space (which is a subset of $\R^4$) in the following way. Define
\begin{subequations}
\label{eq:momvectij}
\begin{align}
\normalizedmomentstens[\Spp]{1} &:= \left(\normalizedmomentcomptens[\Sxp]{1}{0}-\normalizedmomentcomptens[\Sxm]{1}{0},0,\normalizedmomentcomptens[\Syp]{0}{1}-\normalizedmomentcomptens[\Sym]{0}{1},0\right)^T\\
\normalizedmomentstens[\Smp]{1} &:= \left(0,\normalizedmomentcomptens[\Sxm]{1}{0}-\normalizedmomentcomptens[\Sxp]{1}{0},\normalizedmomentcomptens[\Syp]{0}{1}-\normalizedmomentcomptens[\Sym]{0}{1},0\right)^T\\
\normalizedmomentstens[\Smm]{1} &:= \left(0,\normalizedmomentcomptens[\Sxm]{1}{0}-\normalizedmomentcomptens[\Sxp]{1}{0},0,\normalizedmomentcomptens[\Sym]{0}{1}-\normalizedmomentcomptens[\Syp]{0}{1}\right)^T\\
\normalizedmomentstens[\Spm]{1} &:= \left(\normalizedmomentcomptens[\Sxp]{1}{0}-\normalizedmomentcomptens[\Sxm]{1}{0},0,\normalizedmomentcomptens[\Sym]{0}{1}-\normalizedmomentcomptens[\Syp]{0}{1}\right)^T,
\end{align}
which then fulfils $\MMNorm{\normalizedmomentstens[\Spp]{1}} = \ldots = \MMNorm{\normalizedmomentstens[\Spm]{1}} =: \MMconstant$.
\end{subequations}
Furthermore, any convex combination of those modified moment vectors along neighbouring quadrants (e.g. $\Spp$ and $\Smp$) lies on an isoline of $\MMNorm{\cdot}$, i.e. for $\convexscalar\in[0,1]$ we have
\begin{align*}
\MMNorm{\convexscalar\normalizedmomentstens[\Spp]{1}+(1-\convexscalar)\normalizedmomentstens[\Smp]{1}} = \MMconstant = \MMNorm{\convexscalar\normalizedmomentstens[\Spm]{1}+(1-\convexscalar)\normalizedmomentstens[\Smm]{1}}
\end{align*}
and analogously for the other half-space combinations. This is visualized in \figref{fig:ThreeMomBoundaryFO}.
\begin{figure}
\centering
\externaltikz{ThreeMomBoundaryFO}{\relinput{Images/Realizability2/Realizability2.tex}}
\caption{Linear interpolation between two realizability boundaries in the projected three-space $(\normalizedmomentcomptens[\Sxp]{1}{0},\normalizedmomentcomptens[\Sxm]{1}{0},\normalizedmomentcomptens[\Syp]{0}{1})$ along an isoline of $\MMNorm{\normalizedmomentstens{1}}$. The realizable set with respect to the quadrant $\Smp$ is plotted in grey.}
\label{fig:ThreeMomBoundaryFO}
\end{figure}
It can be shown that $\normalizedmomentstens{1}$ (and consequently $\moments$) can be written as a convex combination of the moments $\normalizedmomentstens[\Spp]{1}$ to $\normalizedmomentstens[\Spm]{1}$. Indeed, defining
\begin{gather}
\label{eq:MM1Interpolation}
\convexscalar_1:=\cfrac{\normalizedmomentcomptens[\Syp]{0}{1}}{\normalizedmomentcomptens[\Syp]{0}{1}-\normalizedmomentcomptens[\Sym]{0}{1}}\in[0,1]\quand  \convexscalar_2:=\cfrac{\normalizedmomentcomptens[\Sxp]{1}{0}}{\normalizedmomentcomptens[\Sxp]{1}{0}-\normalizedmomentcomptens[\Sxm]{1}{0}}\in[0,1]
\end{gather} 
we see that 
\begin{align*}
\normalizedmomentstens[\Sxp]{1} &:= \convexscalar_1 \normalizedmomentstens[\Spp]{1}+(1-\convexscalar_1)\normalizedmomentstens[\Spm]{1}
=\left(\normalizedmomentcomptens[\Sxp]{1}{0}-\normalizedmomentcomptens[\Sxm]{1}{0},0,\normalizedmomentcomptens[\Syp]{0}{1},\normalizedmomentcomptens[\Sym]{0}{1}\right)^T\\
\normalizedmomentstens[\Sxm]{1} &:= \convexscalar_1\normalizedmomentstens[\Smp]{1}+(1-\convexscalar_1)\normalizedmomentstens[\Smm]{1} = \left(0,\normalizedmomentcomptens[\Sxm]{1}{0}-\normalizedmomentcomptens[\Sxp]{1}{0},\normalizedmomentcomptens[\Syp]{0}{1},\normalizedmomentcomptens[\Sym]{0}{1}\right)^T
\intertext{and finally}
\normalizedmomentstens{1} &= \convexscalar_2\normalizedmomentstens[\Sxp]{1}+(1-\convexscalar_2)\normalizedmomentstens[\Sxm]{1}.
\end{align*}
Since $\underline{\normalizedmomentstens[\Sij]{1}}\in\Sij$ we see that $\momentcomptens{0}{0}\left(1,\normalizedmoments[\Sij]\right)^T$ can be realized (with respect to $\mmbasis[1]$) by a distribution function with support in $\Sij$, namely the quarter-moment distribution $\distribution[\Sij]$ (realizing $\momentcomptens{0}{0}\left(1,\underline{\normalizedmomentstens[\Sij]{1}}\right)^T$ with respect to $\pmbasis[\Sij]$) as given in equation \eqref{eq:psiQM} in \lemref{lem:QMFORealizability}.

Therefore, due to the linearity of the problem, a non-negative realizing distribution for $\moments$ is given by
\begin{align}
\label{eq:psiMM}
\distribution = \convexscalar_2\left(\convexscalar_1\distribution[\Spp]+(1-\convexscalar_1)\distribution[\Spm]\right) + (1-\convexscalar_2)\left(\convexscalar_1\distribution[\Smp]+(1-\convexscalar_1)\distribution[\Smm]\right).
\end{align}
\end{proof}

\section{Kershaw closures}
\label{sec:Kershaw}
A typical drawback of the minimum-entropy models defined by \eqref{eq:psiME} is that the dual problem \eqref{eq:dual} cannot be solved analytically. The numerical solution, which has to be calculated at least once in every space-time cell, is challenging and expensive \cite{Hauck2010,Alldredge2014}.

On the other hand, standard $\PN$ models may give physically irrelevant solutions since they do not ensure positivity of the underlying distribution function. 

Due to this, \emph{Kershaw closures} became recently a topic of increasing interest. They are constructed in such a way that they are automatically generated by a nonnegative distribution function $\ansatz$. Therefore, the moment vector including the unknown highest moment is also realizable with respect to the basis of one order higher. Furthermore the flux function $\ints{\SC\basis\ansatz}$ is chosen to be exact (i.e. $\ints{\SC\basis\ansatz} = \ints{\SC\basis\distribution}$ where $\distribution$ realizes the moment vector including the unknown highest moment) if $\moments = \isotropicmoment = \frac{\momentcomptens{0}{0}}{4\pi}\ints{\basis}$ is the isotropic moment. The last condition is also called the \emph{isotropic interpolation condition}. 

In one spatial dimension for a full-moment basis, the Kershaw closure is also exact on the realizability boundary, because there the realizing measure on the realizability boundary is unique. This property is no longer true in higher dimension (see e.g. \cite{Monreal}) or for other models. However, it gives an idea of how to construct such a closure. 

The name of the closure is dedicated to David Kershaw who first proposed such an idea in \cite{Ker76}. For an introduction into Kershaw closures in one space dimension we refer to \cite{Schneider2014}.
The construction of Kershaw closures of first order as done in \cite{Schneider2014} requires realizability information of second order. With this it is possible to linearly interpolate between different parts of the higher-order realizability boundaries (choosing the interpolation parameter in such a way that the isotropic moment is interpolated). The resulting model is then cheap to evaluate because it is analytically closed (in contrast to minimum-entropy models).

Unfortunately, we were not able to provide a closed second-order realizability theory for mixed or quarter moments yet which implies that we can't identify the correct parts of this second-order realizability boundary for interpolation. However, it turns out that under some assumptions on $\normalizedmomentstens[\Sij]{2}$ the second-order realizability information from the half moments in one space dimension are sufficient to define the Kershaw closure for quarter moments in a similar fashion.

For mixed moments we choose a different approach to build the unknown second moment. Abusing the constructive procedure in Theorem \ref{thm:NecSuffFOMM} we are able to provide a closure for mixed moments by combining the quarter-moment closures accordingly.

\subsection{Quarter moments}
It was shown in \cite{Lev84} that, by assuming that the distribution function is symmetric around a preferred direction, the second moment of the $\MN[1]$ closure can be decomposed into 
\begin{align}
\label{eq:FMM1Closure}
\normalizedmomentstens[\sphere]{2} = \cfrac{1-\eddington}{2}I+\cfrac{\left(3\eddington-1\right)\normalizedmomentstens[\sphere]{1}{\normalizedmomentstens[\sphere]{1}}^T}{2\norm{\normalizedmomentstens[\sphere]{1} }{2}^2}
\end{align}
where $\eddington = \eddington\left(\norm{\normalizedmomentstens[\sphere]{1}}{2}\right)$ is the so-called Eddington factor. This implies that $\normalizedmomentstens[\sphere]{1}$ and ${\normalizedmomentstens[\sphere]{1}}^\perp$ are eigenvectors of $\normalizedmomentstens[\sphere]{2}$. 

This is not necessarily true for all $\normalizedmomentstens[\Sij]{1}$ but has to be true on a subset of the quarterspace.
\begin{lem}
\label{lem:QM1Subsets}
Let $\normalizedmomentstens[\Sij]{1}$ fulfil either\\\vskip 0.1cm
{\centering\begin{inparaenum}[(a)]
\item $\normalizedmomentcomptens[\Sij]{1}{0} = 0$,
\item $\normalizedmomentcomptens[\Sij]{0}{1} = 0$,
\item $\abs{\normalizedmomentcomptens[\Sij]{1}{0}} = \abs{\normalizedmomentcomptens[\Sij]{0}{1}}$ or
\item $\norm{\normalizedmomentstens[\Sij]{1}}{2} = 1$.\\\vskip 0.1cm
\end{inparaenum}}
Then for any $\normalizedmomentstens[\Sij]{2}$ which is generated by a realizing distribution for $\normalizedmomentstens[\Sij]{1}$ we have that $\normalizedmomentstens[\Sij]{1}$ is an eigenvector of $\normalizedmomentstens[\Sij]{2}$ with eigenvalue $\eigenvalue$ satisfying $\norm{\normalizedmomentstens[\Sij]{1}}{2}^2 \leq \eigenvalue \leq \norm{\normalizedmomentstens[\Sij]{1}}{2}$.
\end{lem}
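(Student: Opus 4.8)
The plan is, in each of the four cases, to work with the realizing measure normalised to unit mass (by homogeneity of both conclusions we may take $\momentcomptens[\Sij]{0}{0}=1$, so that $\distribution$ is a probability measure on $\Sij$ and $\normalizedmomentstens[\Sij]{1}=\intA{\SC\distribution}{\Sij}$, $\normalizedmomentstens[\Sij]{2}=\intA{\SC\otimes\SC\,\distribution}{\Sij}$) and to reduce everything to one-variable estimates on $\Sij$. I would first dispose of the eigenvalue bounds, which are uniform across the four cases once we know the normalised first moment is an eigenvector: if $\normalizedmomentstens[\Sij]{2}\normalizedmomentstens[\Sij]{1}=\eigenvalue\normalizedmomentstens[\Sij]{1}$ then $\eigenvalue\,\norm{\normalizedmomentstens[\Sij]{1}}{2}^2=(\normalizedmomentstens[\Sij]{1})^T\normalizedmomentstens[\Sij]{2}\normalizedmomentstens[\Sij]{1}=\intA{(\SC\cdot\normalizedmomentstens[\Sij]{1})^2\distribution}{\Sij}$. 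By Cauchy--Schwarz (Jensen) this is at least $\big(\intA{\SC\cdot\normalizedmomentstens[\Sij]{1}\,\distribution}{\Sij}\big)^2=\norm{\normalizedmomentstens[\Sij]{1}}{2}^4$, giving $\eigenvalue\ge\norm{\normalizedmomentstens[\Sij]{1}}{2}^2$; and since $\SC\cdot\normalizedmomentstens[\Sij]{1}\ge0$ on $\Sij$ (the components of $\SC$ and of $\normalizedmomentstens[\Sij]{1}$ carry the quadrant's sign pattern, cf. \lemref{lem:QMFORealizability}) while $\SC\cdot\normalizedmomentstens[\Sij]{1}\le\norm{\SC}{2}\norm{\normalizedmomentstens[\Sij]{1}}{2}\le\norm{\normalizedmomentstens[\Sij]{1}}{2}$, we get $(\SC\cdot\normalizedmomentstens[\Sij]{1})^2\le\norm{\normalizedmomentstens[\Sij]{1}}{2}\,(\SC\cdot\normalizedmomentstens[\Sij]{1})$ pointwise, hence $\eigenvalue\,\norm{\normalizedmomentstens[\Sij]{1}}{2}^2\le\norm{\normalizedmomentstens[\Sij]{1}}{2}^3$, i.e. $\eigenvalue\le\norm{\normalizedmomentstens[\Sij]{1}}{2}$. (If $\normalizedmomentstens[\Sij]{1}=0$ the same sign argument forces $\distribution=\diracmd_0$, $\normalizedmomentstens[\Sij]{2}=0$, which is trivial; so below I assume $\normalizedmomentstens[\Sij]{1}\neq0$.)

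For cases (a) and (b): if $\normalizedmomentcomptens[\Sij]{1}{0}=0$ then, since $\SCx$ does not change sign on $\Sij$, $\intA{\abs{\SCx}\distribution}{\Sij}=0$ forces $\distribution$ to be concentrated on the edge $\{\SC\in\Sij:\SCx=0\}$, on which $\SC=s\,\unitvector{2}$ for a scalar $s$ of fixed sign with $\abs{s}\le1$. There $\SC\otimes\SC=s^2\,\unitvector{2}\otimes\unitvector{2}$, so $\normalizedmomentstens[\Sij]{2}=\intA{s^2\distribution}{\Sij}\,\unitvector{2}\otimes\unitvector{2}$ and $\normalizedmomentstens[\Sij]{1}=\intA{s\distribution}{\Sij}\,\unitvector{2}$; thus $\normalizedmomentstens[\Sij]{1}$ is an eigenvector with $\eigenvalue=\intA{s^2\distribution}{\Sij}$, and the bounds follow from $\big(\intA{s\distribution}{\Sij}\big)^2\le\intA{s^2\distribution}{\Sij}\le\intA{\abs{s}\distribution}{\Sij}=\abs{\intA{s\distribution}{\Sij}}$. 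Case (b) is identical with $\SCx\leftrightarrow\SCy$. For case (d): if $\norm{\normalizedmomentstens[\Sij]{1}}{2}=1$ then the chain $1=\norm{\intA{\SC\distribution}{\Sij}}{2}\le\intA{\norm{\SC}{2}\distribution}{\Sij}\le1$ is an equality, which forces $\distribution=\diracmd_{\normalizedmomentstens[\Sij]{1}}$, whence $\normalizedmomentstens[\Sij]{2}=\normalizedmomentstens[\Sij]{1}(\normalizedmomentstens[\Sij]{1})^T$ and $\normalizedmomentstens[\Sij]{2}\normalizedmomentstens[\Sij]{1}=\normalizedmomentstens[\Sij]{1}$ at once.

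The genuinely delicate case is (c). When $\abs{\normalizedmomentcomptens[\Sij]{1}{0}}=\abs{\normalizedmomentcomptens[\Sij]{0}{1}}$ the vector $\normalizedmomentstens[\Sij]{1}$ points along the bisecting diagonal $\unitvector{d}$ of the quadrant $\Sij$, and $\Sij$ is invariant under the orthogonal reflection $R$ across $\operatorname{span}(\unitvector{d})$. The point to stress is that for a \emph{generic} realizing density the first moment need not be an eigenvector of $\normalizedmomentstens[\Sij]{2}$ (a two-point measure on the two edges of $\Sij$ already shows this), so the statement must be read for the realizing densities that enter the Kershaw construction --- those symmetric about the preferred direction $\unitvector{d}$, in the spirit of the $\MN[1]$ decomposition \eqref{eq:FMM1Closure}. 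For such $\distribution$ with $R_*\distribution=\distribution$ the matrix $\normalizedmomentstens[\Sij]{2}$ commutes with $R$; since the eigendirections of $R$ are exactly $\unitvector{d}$ and $\unitvector{d}^\perp$ and $\normalizedmomentstens[\Sij]{1}\parallel\unitvector{d}$, it follows that $\normalizedmomentstens[\Sij]{2}\normalizedmomentstens[\Sij]{1}\parallel\normalizedmomentstens[\Sij]{1}$. (Equivalently, any realizing $\distribution$ can first be replaced by $\tfrac12(\distribution+R_*\distribution)$, which leaves $\normalizedmomentstens[\Sij]{1}$ unchanged because $R\normalizedmomentstens[\Sij]{1}=\normalizedmomentstens[\Sij]{1}$.) Together with the eigenvalue estimate from the first paragraph this closes all four cases.

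I expect case (c) to be the real obstacle --- not because the reflection argument is hard, but because one must fix which realizing densities the claim covers: over \emph{all} realizing densities the eigenvector assertion in (c) is false, and the honest remedy is to add the preferred-direction symmetry to the hypotheses (or to phrase the conclusion for the symmetrised density), precisely as the $\MN[1]$-type ansatz \eqref{eq:FMM1Closure} already assumes.
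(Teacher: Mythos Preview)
Your handling of cases (a), (b), and (d) is correct and essentially parallels the paper's argument; your unified eigenvalue bound at the start (Jensen for the lower bound, sign-alignment plus Cauchy--Schwarz for the upper) is in fact tidier than the paper's case-by-case computations.

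On case (c) you have actually spotted a genuine error in the paper rather than a gap in your own argument. The paper's proof asserts that $\abs{\normalizedmomentcomptens[\Spp]{1}{0}} = \abs{\normalizedmomentcomptens[\Spp]{0}{1}}$ forces the realizing distribution to be supported on the diagonal $\SCangle=\pi/4$, and then proceeds from there. That implication is false: take on $\Spp$ the two-point measure $\distribution = (2-\sqrt{3})\,\diracmd_{(1,0)} + (\sqrt{3}-1)\,\diracmd_{(1/2,\,\sqrt{3}/2)}$ (both at $\SCheight=0$). Then $\normalizedmomentcomptens[\Spp]{1}{0}=\normalizedmomentcomptens[\Spp]{0}{1}=(3-\sqrt{3})/2$, but $\normalizedmomentstens[\Spp]{2}(1,1)^T = \bigl((5-2\sqrt{3})/2,\;\sqrt{3}/2\bigr)^T$, which is not a multiple of $(1,1)^T$. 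So the lemma as stated is false in case (c) for general realizing distributions, and the paper's support claim is the broken step.

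Your proposed remedy --- restrict case (c) to distributions symmetric under the reflection $R$ across the quadrant bisector, or equivalently pass from $\distribution$ to $\tfrac12(\distribution+R_*\distribution)$ --- is exactly the right fix, and it is consistent with the preferred-direction symmetry already assumed in the ansatz \eqref{eq:FMM1Closure} that the lemma is meant to motivate. In short: your proof is sound for (a), (b), (d); in (c) you have correctly diagnosed that the statement needs an added symmetry hypothesis, which the paper's proof implicitly (and illegitimately) smuggles in.
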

\begin{proof}
We only prove the case $\Sij = \Spp$. The other quadrants follow similarly.
Assume that $\normalizedmomentstens[\Spp]{1}$ is generated by $\distribution\geq 0$. Since $\SCx,\SCy\geq 0$ we have the following.
\begin{enumerate}[(a)]
\item If $\normalizedmomentcomptens[\Sij]{1}{0} = 0$ it follows immediately that $\distribution$ has support only on $\SCangle = \frac{\pi}{2}$ and $\SCheight\in[-1,1]$. On this support we have $\SCx = 0$, which implies that
\begin{align}
\label{eq:QM1SecondMomentSupport1}
\normalizedmomentstens[\Spp]{2} &= \begin{pmatrix}
0 & 0\\0 & \normalizedmomentcomptens[\Spp]{0}{2}
\end{pmatrix} \quand\\
\nonumber\normalizedmomentstens[\Spp]{2}\normalizedmomentstens[\Spp]{1} 
&= \begin{pmatrix}
0 & 0\\0 & \normalizedmomentcomptens[\Spp]{0}{2}
\end{pmatrix}
\begin{pmatrix}0 \\\normalizedmomentcomptens[\Spp]{0}{1}\end{pmatrix}
= \normalizedmomentcomptens[\Spp]{0}{2}\normalizedmomentstens[\Spp]{1}
\end{align} 
Similar to the one-dimensional case we observe that (keeping the support of $\distribution$ in mind) 
\begin{align*}
\momentcomptens[\Spp]{0}{2} = \intpp{\SCy^2\distribution} = 2\pi\int\limits_{-1}^1\left(1-\SCheight^2\right)\distribution~d\SCheight \leq  
2\pi\int\limits_{-1}^1\sqrt{1-\SCheight^2}\distribution~d\SCheight = \momentcomptens[\Spp]{0}{1}
\end{align*}
and using Cauchy-Schwarz inequality 
\begin{align*}
\momentcomptens[\Spp]{0}{2}\momentcomptens[\Spp]{0}{0} &= \intpp{\SCy^2\distribution}\intpp{\distribution} = 4\pi^2\int\limits_{-1}^1\left(1-\SCheight^2\right)\distribution~d\SCheight\int\limits_{-1}^1\distribution~d\SCheight \\&\stackrel{\text{CSI}}{\geq}  
4\pi^2\left(\int\limits_{-1}^1\sqrt{1-\SCheight^2}\distribution~d\SCheight\right)^2 = {\momentcomptens[\Spp]{0}{1}}^2.
\end{align*}
Therefore the eigenvalue $\eigenvalue := \normalizedmomentcomptens[\Spp]{0}{2}$ to the eigenvector $\normalizedmomentstens[\Spp]{1}$ has to satisfy
\begin{align*}
\norm{\normalizedmomentstens[\Sij]{1}}{2}^2 = {\momentcomptens[\Spp]{0}{1}}^2 \leq \eigenvalue \leq {\momentcomptens[\Spp]{0}{1}} = \norm{\normalizedmomentstens[\Sij]{1}}{2}.
\end{align*}
\item Analogously to (a)
\item If $\abs{\normalizedmomentcomptens[\Sij]{1}{0}} = \abs{\normalizedmomentcomptens[\Sij]{0}{1}}$ it follows that 
$\distribution$ has support only on $\SCangle = \frac{\pi}{4}$ and $\SCheight\in[-1,1]$. On this support we have $\SCx = \SCy$, which implies that
\begin{align*}
\normalizedmomentstens[\Spp]{2} &= \begin{pmatrix}
\normalizedmomentcomptens[\Spp]{0}{2} & \normalizedmomentcomptens[\Spp]{0}{2}\\\normalizedmomentcomptens[\Spp]{0}{2} & \normalizedmomentcomptens[\Spp]{0}{2}
\end{pmatrix}, \quad\text{leading to}\\
\normalizedmomentstens[\Spp]{2}\normalizedmomentstens[\Spp]{1} 
&= \begin{pmatrix}
\normalizedmomentcomptens[\Spp]{2}{0} & \normalizedmomentcomptens[\Spp]{2}{0}\\\normalizedmomentcomptens[\Spp]{2}{0} & \normalizedmomentcomptens[\Spp]{2}{0}
\end{pmatrix}
\begin{pmatrix}\normalizedmomentcomptens[\Spp]{1}{0} \\\normalizedmomentcomptens[\Spp]{1}{0}\end{pmatrix}
= 2\normalizedmomentcomptens[\Spp]{2}{0}\normalizedmomentstens[\Spp]{1}.
\end{align*} 
In this case we have 
\begin{align*}
\momentcomptens[\Spp]{2}{0} = \intpp{\SCx^2\distribution} = \pi\int\limits_{-1}^1\left(1-\SCheight^2\right)\distribution~d\SCheight \leq  
\pi\int\limits_{-1}^1\sqrt{1-\SCheight^2}\distribution~d\SCheight = \frac{1}{\sqrt{2}}\momentcomptens[\Spp]{1}{0}
\end{align*}
while the lower bound is $\momentcomptens[\Spp]{2}{0}\momentcomptens[\Spp]{0}{0}\geq {\momentcomptens[\Spp]{0}{1}}^2$ as before.
%\begin{align*}
%\momentcomptens[\Spp]{2}{0}\momentcomptens[\Spp]{0}{0} &= \intpp{\SCx^2\distribution}\intpp{\distribution} = 2\pi^2\int\limits_{-1}^1\left(1-\SCheight^2\right)\distribution~d\SCheight\int\limits_{-1}^1\distribution~d\SCheight \\&\leq  
%2\pi^2\left(\int\limits_{-1}^1\sqrt{1-\SCheight^2}\distribution~d\SCheight\right)^2 = {\momentcomptens[\Spp]{0}{1}}^2.
%\end{align*}
Therefore the eigenvalue $\eigenvalue := 2\normalizedmomentcomptens[\Spp]{0}{2}$ to the eigenvector $\normalizedmomentstens[\Spp]{1}$ has to satisfy
\begin{align*}
\norm{\normalizedmomentstens[\Sij]{1}}{2}^2=2{\momentcomptens[\Spp]{1}{0}}^2 \leq \eigenvalue \leq \sqrt{2}{\momentcomptens[\Spp]{1}{0}} = \norm{\normalizedmomentstens[\Sij]{1}}{2}.
\end{align*}

\item This follows exactly in the same way as for full moments, see e.g. \cite{Monreal}. Note that here, $\norm{\normalizedmomentstens[\Sij]{1}}{2}^2 = \eigenvalue = \norm{\normalizedmomentstens[\Sij]{1}}{2} = 1$.
\end{enumerate}

\end{proof}

Numerical tests suggest that for the $\QMN[1]$ model the first normalized moment is indeed always close (but not equal) to an eigenvector of the second normalized moment, i.e. $\winkel\left(\momentstens[\Sij]{1},\eigenvector_1\right)$ remains small (where $\eigenvector_1$ is the eigenvector of $\momentstens[\Sij]{2}$ with smaller enclosing angle between itself and $\momentstens[\Sij]{1}$), see \figref{fig:QM1_eigenvector_deviation}. This has been checked numerically using the $\QMN[1]$-tabulation strategy given in \cite{Frank2006}

\begin{figure}
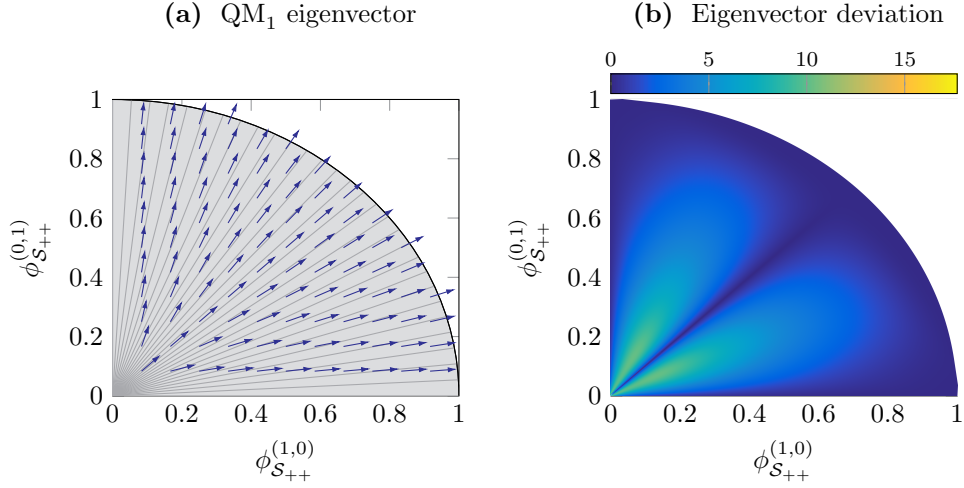
 
\centering
\settikzlabel{fig:QM1_eigenvector_deviationa}
\externaltikz{QM1_eigenvector_deviation}{
	\relinput{Images/QM1_eigenvector_deviation/QM1_eigenvector_deviation.tex}
}
\caption{One eigenvector of the $\QMN[1]$ second moment $\normalizedmomentstens[\Spp]{2}$ and its minimal deviation from $\normalizedmomentstens[\Spp]{1}$, measured in degrees.}
\label{fig:QM1_eigenvector_deviation}
\end{figure}

Since the structure of eigenvectors of the $\QMN[1]$ model is not immediately obvious to us, it seems reasonable to assume the simple form \eqref{eq:FMM1Closure} for $\QKN[1]$ as well:
\begin{align}
\label{eq:QK1closure}
\normalizedmomentstens[\Sij]{2} = \QKalpha{1}I+\QKalpha{2}\cfrac{\normalizedmomentstens[\Sij]{1}{\normalizedmomentstens[\Sij]{1}}^T }{\norm{\normalizedmomentstens[\Sij]{1} }{2}^2}
\end{align}
where the coefficients $\QKalpha{1}(\moments), \QKalpha{2}(\moments)\in\Rpos$ have to be chosen accordingly. The eigenvalue associated to $\normalizedmomentstens[\Sij]{1}$ is given by $\eigenvalue = \QKalpha{1}+\QKalpha{2}$. \\

A short calculation shows that under this assumptions the bounds $\norm{\normalizedmomentstens[\Sij]{1}}{2}^2 \leq \eigenvalue \leq \norm{\normalizedmomentstens[\Sij]{1}}{2}$ have to be fulfilled for all $\normalizedmomentstens[\Sij]{1}\in\Sij$, and not only on the subsets of $\Sij$ shown in \lemref{lem:QM1Subsets}. For every $\unitnormalvector\in\Sij$ it holds that
\begin{align*}
\momentstens[\Sij]{2}\unitnormalvector = \intsij{\SC\underbrace{\left(\unitnormalvector\cdot\SC\right)}_{\leq 1}\distribution}\leq \intsij{\SC\distribution} = \momentstens[\Sij]{1},
\end{align*}
where the last inequality is meant component-wise. This implies for $\unitnormalvector = \frac{\normalizedmomentstens[\Sij]{1}}{\norm{\normalizedmomentstens[\Sij]{1}}{2}}$ that 
\begin{align*}
\momentstens[\Sij]{2}\unitnormalvector = \lambda\unitnormalvector \leq \momentstens[\Sij]{1},
\end{align*}
and consequently $\eigenvalue \leq \norm{\normalizedmomentstens[\Sij]{1}}{2}$.\\
The lower bound on $\eigenvalue$ follows in the same way as for full moments observing that $\momentstens[\Sij]{2}-\momentstens[\Sij]{1}{\momentstens[\Sij]{1}}^T$ has to be positive semi-definite.\\

The construction of the Kershaw closure in one spatial dimension linearly interpolates between the upper and lower boundary values for the second moment in such a way that the isotropic point is correctly hit. We define this interpolation not on the second moment but on its eigenvalues. The isotropic moment vector is given by $\normalizedmomentcomptens[\Spp]{1}{0} = \normalizedmomentcomptens[\Spp]{0}{1} = \frac{1}{2}$ (similarly for the other quadrants with appropriately changed sign), $\normalizedmomentcomptens[\Spp]{2}{0} = \normalizedmomentcomptens[\Spp]{0}{2} = \frac{1}{3}$ and $\normalizedmomentcomptens[\Spp]{1}{1} = \frac{2}{3\pi}$.

We therefore define
\begin{align*}
\eigenvalue = \QKalpha{1}+\QKalpha{2} = \convexscalar\norm{\normalizedmomentstens[\Sij]{1}}{2}^2 + (1-\convexscalar)\norm{\normalizedmomentstens[\Sij]{1}}{2},
\end{align*}
where $\convexscalar = \convexscalar\left(\normalizedmomentstens[\Spp]{1}\right)\in[0,1]$ has to be chosen accordingly. Plugging in the isotropic point we get that $\convexscalar\left(\frac12,\frac12\right) = -\frac{2\, \pi - 3\, \pi\, \sqrt{2} + 4}{3\, \pi\, \left(\sqrt{2} - 1\right)}\approx 0.7801$. 

Although other choices are possible we furthermore assume that $\QKalpha{1}+\QKalpha{2}$ is rotationally symmetric, which implies that $\convexscalar = \convexscalar\left(\norm{\normalizedmomentstens[\Sij]{1}}{2}\right)$. Unfortunately, the simplest choice $\convexscalar \equiv \convexscalar\left(\frac12,\frac12\right)$ leads to a moment system which is no longer hyperbolic. In fact, some of the eigenvalues of the flux Jacobian are imaginary, especially in those regions where the $\QMN[1]$ eigenvectors deviate strongly from $\normalizedmomentstens[\Sij]{1}$ (compare \figref{fig:QM1_eigenvector_deviation}). We fixed this problem by choosing $\convexscalar$ to be linear in $\norm{\normalizedmomentstens[\Sij]{1}}{2}$, i.e.
\begin{align*}
\convexscalar = 1-\sqrt{2}\left(1 - \convexscalar\left(\frac12,\frac12\right)\right)\norm{\normalizedmomentstens[\Sij]{1}}{2}.
\end{align*}

The proof of \lemref{lem:QM1Subsets} reveals more information about the choice of $\QKalpha{1}$. In situations (a), (b) and (d) (i.e. on the realizability boundary), $\QKalpha{1}$ has to be zero, while it has to be $\frac13 -\frac{2}{3\pi}$ at the isotropic point (recall that there $\QKalpha{2} = \frac{2}{3\pi}$ due to the off-diagonals of the second moment). The simplest function fulfilling these two conditions is given by 

\begin{align*}
\QKalpha{1} &= \abs{\normalizedmomentcomptens[\Sij]{1}{0}\, \normalizedmomentcomptens[\Sij]{0}{1}}\, \left(\frac{8}{3}-\frac{16}{3\, \pi}\right)\, \left(1-\norm{\normalizedmomentstens[\Sij]{1}}{2}^2\right)\\
\end{align*}
The corresponding closure relation is shown in \figref{fig:QM1_second_moment}. Despite the fact that the assumption on the eigenvectors for $\QMN[1]$ is not true for all $\normalizedmomentstens[\Sij]{1}$ the resulting second moment for $\QKN[1]$ is very close to the minimum-entropy one. This is shown in \figref{fig:QK1_flux_error}.

\begin{figure} 
	\centering
	\externaltikz{Second-moments}{
		\relinput{Images/QM1_second_moment/QM1_second_moment.tex}
	}
	\caption{Components of the $\QKN[1]$ second moment on $\Spp$.}
	\label{fig:QM1_second_moment}
\end{figure}

\begin{figure} 
	\centering
	\externaltikz{QK1_flux_error}{
		\relinput{Images/QK1_flux_error/QK1_flux_error.tex}
	}
	\caption{Deviation of $\normalizedmomentcomptens[\Spp]{2}{0}$ and $\normalizedmomentcomptens[\Spp]{2}{0}$ for $\QMN[1]$ and $\QKN[1]$, respectively.}
	\label{fig:QK1_flux_error}
\end{figure}

Since we are not able to provide a closed second-order realizability theory we cannot conclude immediately that this closure is in fact realizable with respect to $\pmbasis[2]$. We therefore have to go a different way and, as before, provide a realizing distribution for this special case. We only prove the case $\Sij = \Spp$ and drop the lower index $\Spp$ for readability issues. We make the ansatz\footnote{We acknowledge G. Alldredge for presenting us this ansatz.}
\begin{align}
\distribution = \momentcomptens{0}{0}\sum_{\QKdistind \pm}\QKdistcoeff{\QKdistind\pm} \diracmd(\SC - (\normalizedmomentstens{1} \pm \QKdistdens{\QKdistind\pm}\QKdisteigvector_\QKdistind)) \,,
\label{eq:four-delta-ansatz}
\end{align}
where $\QKdistind \in \{ 1, 2 \}$ and the parameters $\QKdistcoeff{\QKdistind\pm},\QKdistdens{\QKdistind\pm}\in\R$ and $\QKdisteigvector_\QKdistind\in\R^2$ still have to be determined. Without loss of generality we assume $\momentcomptens{0}{0} = 1$ for brevity. The moments of this ansatz are
\begin{align*}
1 &\stackrel{!}{=}  \intpp{\distribution} = \sum_{\QKdistind \pm}\QKdistcoeff{\QKdistind\pm}, \\
\normalizedmomentstens{1} &\stackrel{!}{=} \intpp{\SC \distribution}= \left( \sum_{\QKdistind \pm}\QKdistcoeff{\QKdistind\pm} \right) \normalizedmomentstens{1} + \sum\limits_{\QKdistind} (c_{\QKdistind +} \QKdistdens{\QKdistind +} - \QKdistcoeff{\QKdistind -} \QKdistdens{\QKdistind -})\QKdisteigvector_\QKdistind, \\
\normalizedmomentstens{2} &\stackrel{!}{=} \intpp{\SC \SC^T \distribution} = \left( \sum_{\QKdistind \pm}\QKdistcoeff{\QKdistind\pm} \right) \normalizedmomentstens{1} {\normalizedmomentstens{1}}^T + \sum\limits_{\QKdistind} (\QKdistcoeff{\QKdistind +} \QKdistdens{\QKdistind +} - \QKdistcoeff{\QKdistind -} \QKdistdens{\QKdistind -})\QKdisteigvector_\QKdistind {\normalizedmomentstens{1}}^T \\
  & \hspace{2.5cm}+ \sum\limits_{\QKdistind} \left(\QKdistcoeff{\QKdistind +} \QKdistdens{\QKdistind +}^2 + \QKdistcoeff{\QKdistind -} \QKdistdens{\QKdistind -}^2 \right)\QKdisteigvector_\QKdistind \QKdisteigvector_\QKdistind^T.
\end{align*}
Hierarchically inserting the equations into the one of higher order we see that the coefficients $\QKdistcoeff{\QKdistind\pm}, \QKdistdens{\QKdistind \pm}$ fulfil the system of equations
\begin{subequations}
\label{eq:QK1equations}
\begin{align}
 1 &= \sum_{\QKdistind \pm}\QKdistcoeff{\QKdistind\pm}& \label{eq:QK1equationsa}\\
 0 &= \QKdistcoeff{\QKdistind +} \QKdistdens{\QKdistind +} - \QKdistcoeff{\QKdistind -} \QKdistdens{\QKdistind -} \,, & \QKdistind \in \{1, 2\}, \label{eq:QK1equationsb}\\
 \QKdisteig{\QKdistind} &= \QKdistcoeff{\QKdistind +} \QKdistdens{\QKdistind +}^2 + \QKdistcoeff{\QKdistind -} \QKdistdens{\QKdistind -}^2 \,, & \QKdistind \in \{1, 2\}.\label{eq:QK1equationsc}
\end{align}
\end{subequations}
and $(\QKdisteig{\QKdistind},\QKdisteigvector_\QKdistind)$ is the $\QKdistind$-th eigenvalue-eigenvector pair of $\normalizedmomentstens{2}-\normalizedmomentstens{1} {\normalizedmomentstens{1}}^T$, i.e.
\begin{align*}
\QKdisteigvector_1 &= \cfrac{{\normalizedmomentstens{1}}^\perp}{\norm{\normalizedmomentstens{1}}{2}} & \QKdisteig{1} &= \QKalpha{1}\\
\QKdisteigvector_2 &= \cfrac{\normalizedmomentstens{1}}{\norm{\normalizedmomentstens{1}}{2}} & \QKdisteig{2} &= \QKalpha{1}+\QKalpha{2}-\norm{\normalizedmomentstens{1}}{2}^2
\end{align*}
Since $\distribution$ has to be supported in $\Spp$ and should be nonnegative we also get the inequality-constraints
\begin{subequations}
\label{eq:QK1inequalities}
\begin{align}
\QKdistcoeff{\QKdistind\pm}\geq 0 \label{eq:cgeq}\\
\normalizedmomentstens{1} \pm \QKdistdens{\QKdistind\pm}\QKdisteigvector_\QKdistind \in \Spp \label{eq:QSconstraints}
\end{align}
\end{subequations}
Combining \eqref{eq:cgeq} and \eqref{eq:QK1equationsb} shows that $\sign{\QKdistdens{\QKdistind+}} = \sign{\QKdistdens{\QKdistind-}}$, thus, without loss of generality, the quarter-sphere constraints \eqref{eq:QSconstraints} can be rewritten as
\begin{align*}
0\leq \QKdistdens{1+} &\leq \min\left(\cfrac{\normalizedmomentcomptens[\Spp]{1}{0}}{\normalizedmomentcomptens[\Spp]{0}{1}}\norm{\normalizedmomentstens{1}}{2},\sqrt{1-\norm{\normalizedmomentstens{1}}{2}^2}\right) =: \QKdistdensub{1+}\\
0\leq \QKdistdens{1-} &\leq \min\left(\cfrac{\normalizedmomentcomptens[\Spp]{0}{1}}{\normalizedmomentcomptens[\Spp]{1}{0}}\norm{\normalizedmomentstens{1}}{2},\sqrt{1-\norm{\normalizedmomentstens{1}}{2}^2}\right)=: \QKdistdensub{1-}\\
0\leq \QKdistdens{2+} &\leq 1-\norm{\normalizedmomentstens{1}}{2}=: \QKdistdensub{2+}\\
0\leq \QKdistdens{2-} &\leq \norm{\normalizedmomentstens{1}}{2} =: \QKdistdensub{2-}
\end{align*}
A visualization of these bounds can be found in \figref{fig:QK1coefficientsvis} for a specific example. The $\min$ function ensures that the first intersection with either the coordinate axes ($\QKdistdens{1+}$ in the example) or the norm-bound ($\QKdistdens{1-}$ in the example) is taken. For $\QKdistdens{2\pm}$ there is no distinction necessary since we can only intersect with one of them.

\begin{figure}
\centering
\externaltikz{QK1ansatz}{\input{Images/QK1ansatz.tex}}
\caption{Visualization of the coefficients $\QKdistdens{\QKdistind\pm}$ for the example $\normalizedmomentstens[\Spp]{1}=\left(0.4,0.6\right)^T$. The length of the arrows represent the maximal value for the respective coefficient.}
\label{fig:QK1coefficientsvis}
\end{figure}
Solving \eqref{eq:QK1equationsb} and \eqref{eq:QK1equationsc} for $\QKdistcoeff{\QKdistind\pm}$ we get
\begin{align*}
\QKdistcoeff{\QKdistind\pm} = \cfrac{\QKdisteig{\QKdistind}}{\QKdistdens{\QKdistind\pm}\left(\QKdistdens{\QKdistind+}+\QKdistdens{\QKdistind-}\right)}\;.
\end{align*}
Plugging this into \eqref{eq:QK1equationsa} we obtain an equation for $\QKdistdens{\QKdistind\pm}$:
\begin{align}
\QKdisteig{2}\QKdistdens{1+}\QKdistdens{1-}+\QKdisteig{1}\QKdistdens{2+}\QKdistdens{2-} = \QKdistdens{1+}\QKdistdens{1-}\QKdistdens{2+}\QKdistdens{2-}  \label{eq:QK1equationsa_mod}
\end{align}
We found that 
\begin{align*}
\QKdistdens{1-} &= \min\left(\cfrac{\normalizedmomentcomptens[\Spp]{0}{1}}{\normalizedmomentcomptens[\Spp]{1}{0}}\norm{\normalizedmomentstens{1}}{2},\sqrt{1-\norm{\normalizedmomentstens{1}}{2}^2}\right)\\
\QKdistdens{2-} &= \norm{\normalizedmomentstens{1}}{2}\\
\QKdistdens{\QKdistind+} &= \cfrac{2\QKdisteig{\QKdistind}}{\QKdistdens{\QKdistind-}},\qquad \QKdistind \in \{1, 2\}
\end{align*}
solves \eqref{eq:QK1equationsa_mod} while fulfilling \eqref{eq:QSconstraints}. 

We want to prove this exemplarily for $\QKdistdens{2+}$. After some symbolic simplifications we see that
\begin{align*}
\QKdistdens{2+} &= \frac{2\, \sqrt{2}\, \left(4-\pi\right)\, \left(\sqrt{2} + 1\right)}{3\, \pi}\left(\norm{\normalizedmomentstens{1}}{2}-\norm{\normalizedmomentstens{1}}{2}^2\right)\geq 0.
\end{align*}
With this it immediately follows that $1-\norm{\normalizedmomentstens{1}}{2}-\QKdistdens{2+}\geq 0$.

Note that the above choice of the coefficients $\QKdistdens{\QKdistind\pm}$ is not unique. Since the corresponding values for $\QKdistcoeff{\QKdistind\pm}$ are also non-negative, \eqref{eq:four-delta-ansatz} is a non-negative distribution function with support in $\Spp$ which realizes the desired moments. Therefore, the $\QKN[1]$ closure is realizable. 

\subsection{Mixed moments}
Since the mixed-moment setting is even more complicated than the quarter-moment one, we want to follow a different approach to construct an approximate closure. As has been shown in Theorem \ref{thm:NecSuffFOMM} it suffices to have a non-negative quarter-moment distribution $\distribution[\Sij]$ to construct a mixed-moment reproducing distribution. If these distributions fulfil the isotropic moment interpolation condition for the quarter-moment basis, by linearity, the resulting mixed-moment distribution \eqref{eq:psiMM} will fulfil it as well for the mixed-moment basis.

Such a distribution function for quarter moments has been found in \eqref{eq:four-delta-ansatz}. Thus setting $\underline{\normalizedmomentstens[\Sij]{1}}$ as in \eqref{eq:phiij} will provide the desired mixed-moment distribution function $\distribution[{\mmbasis[1]}]$ using the interpolation defined by \eqref{eq:MM1Interpolation}.

A closure can now be generated by calculating the second moments with respect to those quarter-space distributions which have support in the corresponding set $\spheresubset$. For example, 
\begin{align*}
\momentcomptens[\Sxp]{2}{0} = \intsxp{\SCx^2\distribution[{\mmbasis[1]}]} = \intpp{\SCx^2\distribution[{\mmbasis[1]}]}+\intpm{\SCx^2\distribution[{\mmbasis[1]}]} =  \convexscalar_{2}\convexscalar_{1}\momentcomptens[\Spp]{2}{0} + (1-\convexscalar_{2})\convexscalar_{1}\momentcomptens[\Spm]{2}{0}
\end{align*}
where $\momentstens[\Sij]{2}$ is defined as in \eqref{eq:QK1closure} with ${\normalizedmomentstens[\Sij]{1}}$ substituted by $\underline{\normalizedmomentstens[\Sij]{1}}$.

\begin{rem}
Numerical experiments suggest that this closure is also (weakly) hyperbolic. No imaginary eigenvalues of the flux Jacobian have been found on a finely discretized grid of the realizable set. However, at the isotropic point both flux Jacobians have three times the eigenvalue $0$. Still, the transformation matrix has full rank, i.e. the geometric multiplicity of the eigenvalue $0$ is $3$.
\end{rem}
\begin{rem}
Any other choice of $\distribution[\Sij]$ which fulfils the desired properties with respect to the quarter-moment basis will be a feasible choice as well. Exemplarily, the quarter-moment minimum-entropy closure $\QMN[1]$ shall be mentioned. An efficient implementation using tabulation is given in \cite{Frank2006}.
\end{rem}

Note that this closure is not an approximation of the $\MMN[1]$ model and might behave differently. This is shown exemplarily in \figref{fig:MM1MQ1_flux_Difference} where the second moment $\normalizedmomentcomptens[\Syp]{0}{2}$ for the $\MMN[1]$ model and the Kershaw closure is plotted against $\normalizedmomentcomptens[\Sxp]{1}{0}$ and $\normalizedmomentcomptens[\Sxm]{1}{0}$ with $\normalizedmomentcomptens[\Syp]{0}{1} = -\normalizedmomentcomptens[\Syp]{0}{1} = \frac14$ fixed. The corresponding shape of the Kershaw closure is very different to those of the $\MMN[1]$ model.

\begin{figure} 
	\centering
	\externaltikz{MM1MQ1_flux_Difference}{
		\relinput{Images/tp_MM1MQ1_flux_Difference/tp_MM1MQ1_flux_Difference.tex}
	}
	\caption{Second moment $\normalizedmomentcomptens[\Syp]{0}{2}$ for the $\MMN[1]$ and Kershaw model for $\normalizedmomentcomptens[\Syp]{0}{1} = -\normalizedmomentcomptens[\Sym]{0}{1} = \frac14$.}
	\label{fig:MM1MQ1_flux_Difference}
\end{figure}

\subsection{Treatment of the Laplace-Beltrami operator}
\label{sec:TreatmentLB}
A clear drawback of the mixed-moment Kershaw closure is that the moments of the Laplace-Beltrami operator (see \secref{sec:MixedMomentsLB}) require the evaluation of the ansatz function. When using the $\QKN[1]$ distribution to realize the mixed-moment closure, the ansatz is a linear combination of Dirac deltas, which cannot be evaluated in a feasible way.\\

One could follow the authors in \cite{Frank07} and replace the ansatz in the Laplace-Beltrami moments by the corresponding polynomial distribution \eqref{eq:PnAnsatz}. Then, we have $\ints{\mmbasis[1]\LaplaceBeltrami\distribution}\approx \MKLBmatrix\moments$ with
\begin{equation}
\label{eq:MK1polyLB}
\MKLBmatrix = \left(\begin{array}{ccccc} 0 & 0 & 0 & 0 & 0\\ \MKLBconst{0} & \MKLBconst{3} & \MKLBconst{2} & \MKLBconst{1} & - \MKLBconst{1}\\ - \MKLBconst{0} & \MKLBconst{2} & \MKLBconst{3} & - \MKLBconst{1} & \MKLBconst{1}\\ \MKLBconst{0} & \MKLBconst{1} & - \MKLBconst{1} & \MKLBconst{3} & \MKLBconst{2}\\ - \MKLBconst{0} & - \MKLBconst{1} & \MKLBconst{1} & \MKLBconst{2} & \MKLBconst{3} \end{array}\right),
\qquad
\begin{aligned}[c]
\MKLBconst{0} &= - \frac{3}{\pi - 4} - 1\\
\MKLBconst{1} &= \frac{3\, \pi\, \left(\pi - 3\right)}{\left(2\, \pi - 4\right)\, \left(\pi - 4\right)}\\
\MKLBconst{2} &=  - \frac{3\, \pi}{\left(2\, \pi - 4\right)\, \left(\pi - 4\right)} - \frac{3}{2}\\
\MKLBconst{3} &=\frac{3\, \pi}{\left(2\, \pi - 4\right)\, \left(\pi - 4\right)} - \frac{1}{2}.
\end{aligned}
\end{equation}
Unfortunately, this closure may lead to an undesired behaviour of the moment-system \eqref{eq:MomentSystemClosed}, since its solution may leave the realizable set (this can be easily shown even in one spatial dimension with the ansatz given in \cite{Frank07}). This follows from the fact that the tangent field spanned by $\MKLBmatrix\moments$ may point outside of the realizable set at those parts of the boundary where the polynomial ansatz \eqref{eq:PnAnsatz} is negative.\\

Consider for example $\moments = \left(1,1,0,0,0\right)^T$. Then we have that 
\begin{align*}
\MKLBmatrix\moments
 &\approx \left( 0, -2.813, 0.813, 1.813, -1.813\right)^T,
\end{align*}
pointing outside the realizable set in the third component (recall $\momentcomptens[\Sxm]{1}{0}\leq 0$).\\

We therefore modify this approach slightly and replace the Dirac ansatz with the tabulated $\QMN[1]$ distribution. Since the fluxes for $\QMN[1]$ and $\QKN[1]$ are very similar, it can be expected that the error done with this replacement is negligible. Furthermore, since the ansatz is positive, the moment-system \eqref{eq:MomentSystemClosed} will behave as expected. Indeed, calculating the approximation for the previous example we get approximately
\begin{align*}
\ints{\mmbasis[1]\LaplaceBeltrami\ansatz[\moments]} 
 &\approx \left( 0, -2, 0, \MKLBconst{}, -\MKLBconst{}\right)^T,~\MKLBconst{}\to\infty,
\end{align*}
which points inside of the realizable set. Note the fourth and fifth component which converges to $\pm\infty$. This is consistent with the original problem since on the realizability boundary, the exponential ansatz converges to a combination of Dirac deltas.
\def\tikzpath{Images/}
\section{Numerical results}
\label{sec:results}
We present the derived models in several benchmark test-cases. The numerical discretizations are obtained with a two-dimensional generalization of the high-order realizability-preserving discontinuous-Galerkin scheme given in \cite{Schneider2015a} for the Kershaw models and a generalization of the realizability-preserving kinetic scheme given in \cite{Schneider2015b} for minimum-entropy models. Those generalizations are in principle straight-forward but might be topic of a follow-up paper. The spatial and temporal order is four where roughly $10000$ rectangular (discontinuous-Galerkin scheme) and $20000$ triangular elements (kinetic scheme) in space were used. The reference solution is discretized with the second-order scheme given in \cite{Roth2014,Carrillo2014,roth2014numerical}.
\subsection{Line Source}
The line-source test is a Green's function problem, where a pulse of particles is emitted in an infinite medium \cite{Brunner2005}. It has been widely investigated for isotropic scattering in \cite{Garrett2014}. We choose the following set of parameters for this problem, smoothing the initial Dirac delta in space:
\begin{itemize}
\item Domain: $\Domain = [-\frac12,\frac12]\times[-\frac12,\frac12]$
\item Final time $\tf=0.45$
\item Parameters: $\absorption=\source = 0$, $\scattering = 1$
\item Initial condition: $\distributiontzero(\spatialVariable,\SC) = \max(\frac{\exp\left({-\frac{\x^2 + \y^2}{2\, {\LinesourceWidth}^2}}\right)}{8\, \pi\, {\LinesourceWidth}^2},10^{-4})$ with $\LinesourceWidth=0.03$
\item Boundary conditions: Isotropic in $\SC$, consistent with initial conditions.
\end{itemize}

We show several model solutions in Figures \ref{fig:Linesource2DCuts} and \ref{fig:Linesource1DCuts}. We observe that convergence is achieved quickly with increasing moment order $\momentorder$ for $\MN$ models which can be observed as well in one spatial dimension. This is a huge contrast to the integral-scattering operator used in \cite{Garrett2014}, where only slow convergence can be observed.\\ 
Note that, by construction, the $\MMN[1]$ model has no rotational symmetry. The main directions of propagation follow the half-spaces, i.e. along the cartesian axes. Still, it performs significantly better than $\MN[1]$ (especially along the diagonal cut shown in \figref{fig:Linesource1DCutsdiagonal}), but worse than $\MN[2]$ which almost converged to the reference solution. We observed that the naive implementation of higher-order mixed-moment closures like in the $\MMN[2]$ model are numerically instable due to errors in the non-linear closure of the Laplace-Beltrami operator (see \secref{sec:MixedMomentsLB}).

Figures \ref{fig:Linesource2DCutsMK1poly} and \ref{fig:Linesource2DCutsMK1} show the $\MKN[1]$ model with the two different closures for the Laplace-Beltrami operator given in \secref{sec:TreatmentLB}. The polynomial closure \eqref{eq:MK1polyLB} has a slightly lower peak speed as the tabulated closure but more interesting is that the latter is much closer to the $\MMN[1]$ solution (\figref{fig:Linesource2DCutsMM1}). This shows, that in contrast to the one-dimensional situation in \cite{Frank07}, the choice of the Laplace-Beltrami closure has a significant effect on the solution of the mixed-moment system.

Since $\scattering>0$, the $\QKN[1]$ model cannot be applied here.

\begin{figure}[htbp]
\settikzlabel{fig:Linesource2DCutsRef}\settikzlabel{fig:Linesource2DCutsM1}\settikzlabel{fig:Linesource2DCutsM2}
\settikzlabel{fig:Linesource2DCutsMM1}\settikzlabel{fig:Linesource2DCutsMK1poly}\settikzlabel{fig:Linesource2DCutsMK1}
\centering
\externaltikz{Linesource2DCuts}{\input{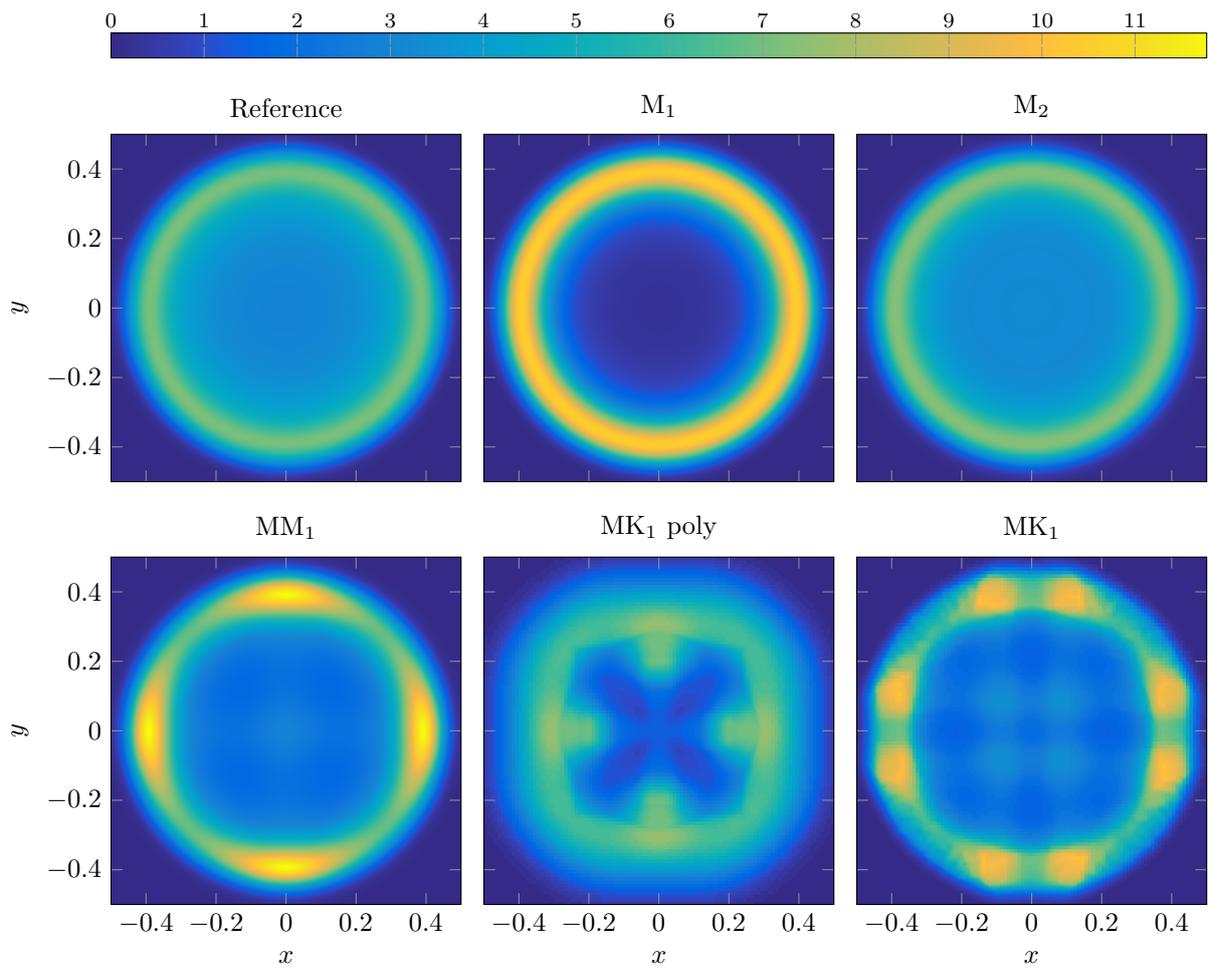}}
\caption{Local particle density $\momentcomptens{0}{0}$ of some models at $\timevar = 0.45$ in the line-source test case.}
\label{fig:Linesource2DCuts}
\end{figure}

\begin{figure}[htbp]
\settikzlabel{fig:Linesource1DCutshorizontal}\settikzlabel{fig:Linesource1DCutsdiagonal}
\centering
\externaltikz{Linesource1DCuts}{\input{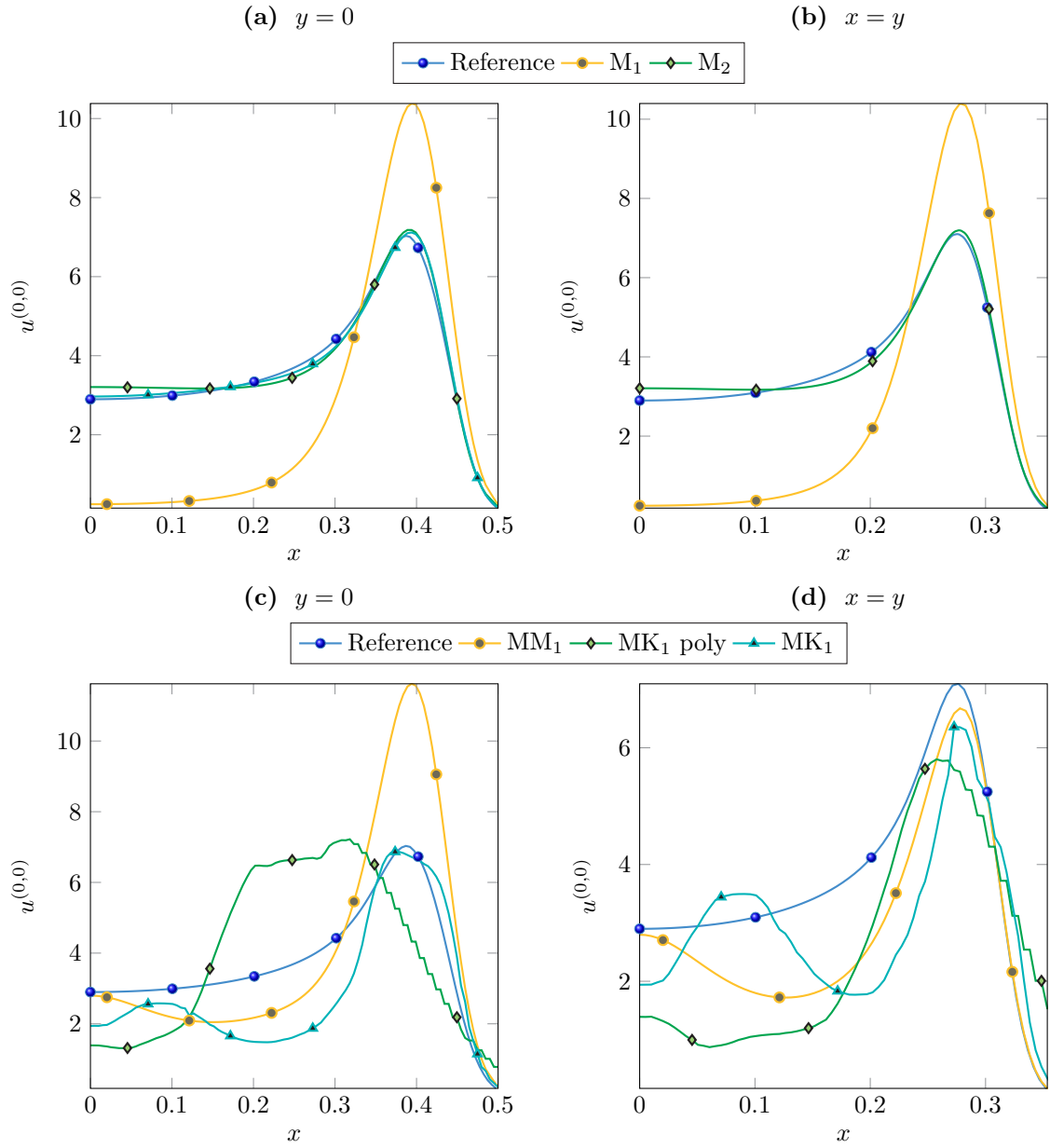}}
\caption{Local particle density $\momentcomptens{0}{0}$ of some models at $\timevar = 0.45$ in the line-source test case, horizontal and diagonal cuts.}
\label{fig:Linesource1DCuts}
\end{figure}

\subsection{Two Beams}
This is the typical situation where the $\MN[1]$ model completely fails and is an often-used benchmark problem in 1D (see e.g. \cite{Frank07,Schneider2014}). Instead of using opposing beams we investigate two orthogonal beams hitting each other in a void. We show two different variants of this. The first one is given by

\begin{figure}[h!]
\settikzlabel{fig:TwoBeams2DCutsRef}\settikzlabel{fig:TwoBeams2DCutsM1}\settikzlabel{fig:TwoBeams2DCutsM2}\settikzlabel{fig:TwoBeams2DCutsM3}
\settikzlabel{fig:TwoBeams2DCutsMM1}\settikzlabel{fig:TwoBeams2DCutsMM2}\settikzlabel{fig:TwoBeams2DCutsMM3}\settikzlabel{fig:TwoBeams2DCutsMK1}\settikzlabel{fig:TwoBeams2DCutsQK1}
\centering
\externaltikz{TwoBeams2DCuts2}{\input{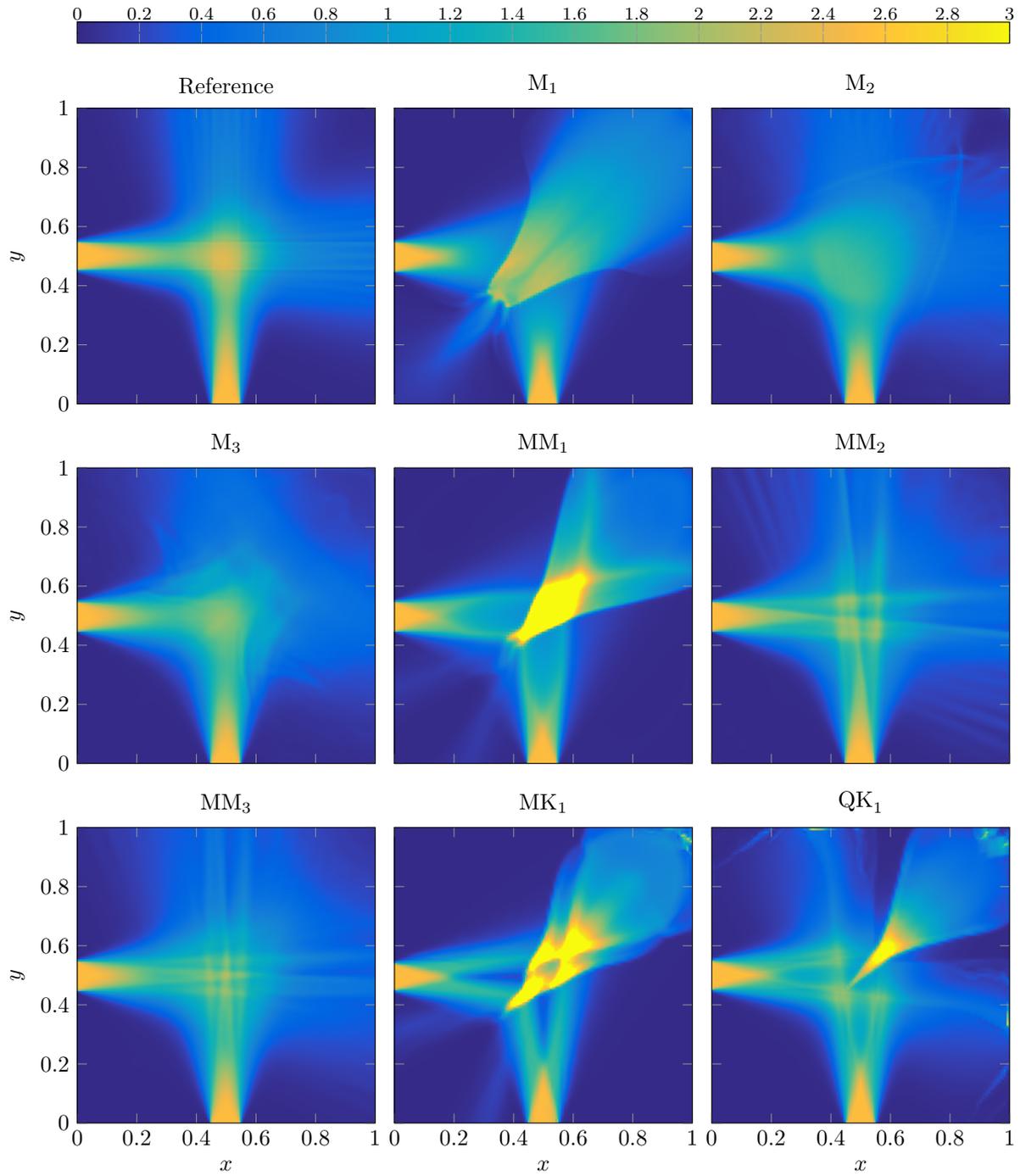}}
\caption{Local particle density $\momentcomptens{0}{0}$ of some models at $\timevar = 1.2$ in the two-beams test case. Colorscale cut at $\momentcomptens{0}{0} = 3$.}
\label{fig:TwoBeams2DCuts2}
\end{figure}
\begin{figure}[htbp]
%\settikzlabel{fig:NumberOfFacetsMn}\settikzlabel{fig:NumberOfFacetsMMn}
\centering
\externaltikz{TwoBeams1DCuts}{\input{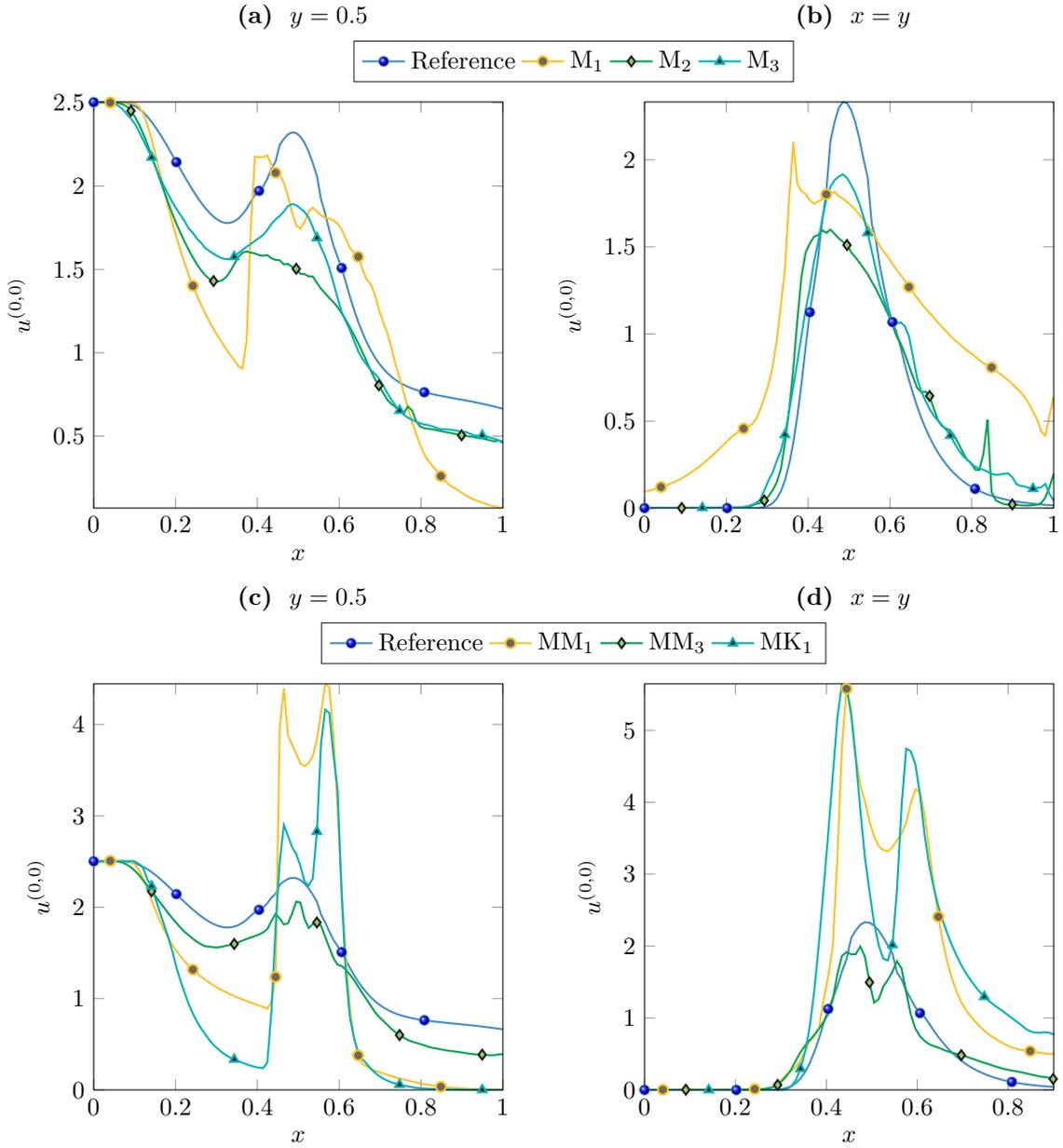}}
\caption{Local particle density $\momentcomptens{0}{0}$ of some models at $\timevar = 1.2$ in the two-beams test case, horizontal and diagonal cuts.}
\label{fig:TwoBeams1DCuts}
\end{figure}

\begin{itemize}
\item Domain: $\Domain = [0,1]\times[0,1]$
\item Final time: $\tf = 1.2$
\item Parameters: $\scattering=\absorption=\source=0$
\item Initial condition: $\distributiontzero(\spatialVariable,\SC) = \cfrac{10^{-4}}{4\pi}$
\item Boundary conditions: 
\begin{align*}
\distributionboundary(\timevar,\spatialVariable,\SC) = \cfrac{100}{4\pi}
\begin{cases}
\exp\left(-\cfrac{\SCheight^2+\SCangle^2}{2\TwoBeamsWidth^2}\right) & \text{ if } \x=0,~\y~\in[0.45,0.55]\\
\exp\left(-\cfrac{\SCheight^2+\left(\SCangle-\frac{\pi}{2}\right)^2}{2\TwoBeamsWidth^2}\right) & \text{ if } \y=0,~\x~\in[0.45,0.55]\\
10^{-6} & \text{ else }
\end{cases}
\end{align*}
where $\TwoBeamsWidth^2 = 0.05$.
\end{itemize}
\figref{fig:TwoBeams2DCuts2} shows the solutions of full- and mixed-moment minimum-entropy models up to order $\momentorder=3$, 

\begin{figure}[htbp]
\centering
\externaltikz{TwoBeamsRot2DCuts}{\input{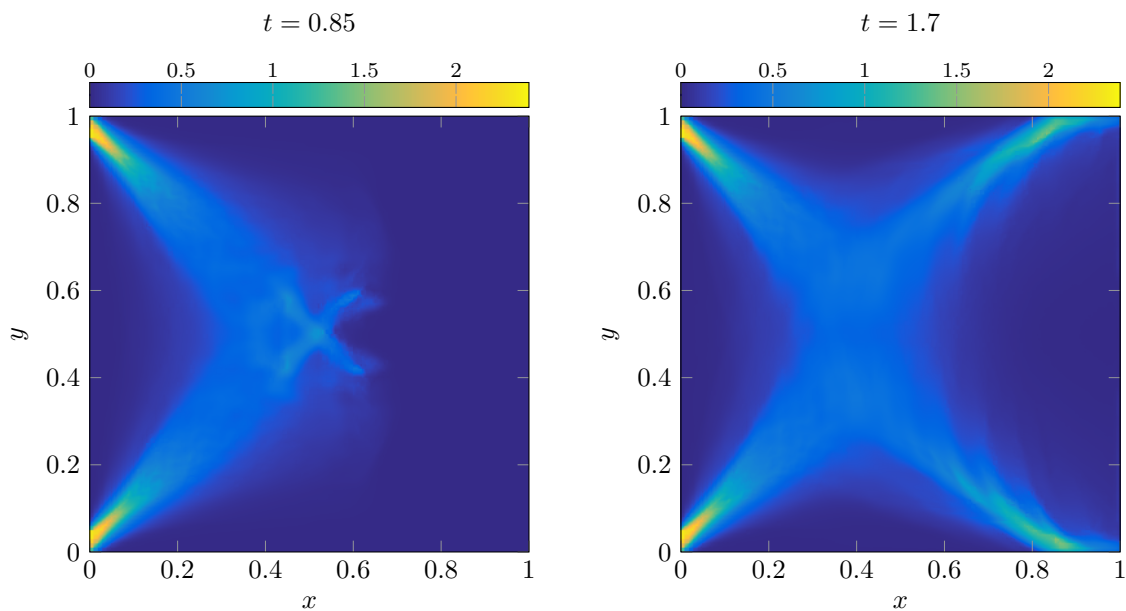}}
\caption{Local particle density $\momentcomptens{0}{0}$ of the $\MKN[1]$ model at different times in the rotated two-beams test case.}
\label{fig:TwoBeamsRot2DCuts}
\end{figure}

In the second version, the whole setup is rotated clockwise. The two beams sit in the upper-left and lower-left corner, respectively, both pointing to the center of the domain. Due to the rotational symmetry of the full-moment models and the reference solution, only the $\MKN[1]$ closure is shown in \figref{fig:TwoBeamsRot2DCuts}. It is visible that in contrast to the first version of this problem (compare \figref{fig:TwoBeams2DCutsMK1}) no shock is produced when the beams hit.

\section{Conclusions and future work}
\label{sec:conclusions}
We developed a two-dimensional extension of mixed moments, leading to a generalization of the original minimum-entropy $\MMN$ model, proposed in \cite{Frank07,Schneider2014}. Additionally, we provided a first-order realizability theory for mixed as well as quarter moments. 
Since the numerical inversion of the general minimum-entropy moment problem is very expensive we developed approximate, but still realizable, closures, the so-called quarter- and mixed-moment Kershaw closures. They have approximately the same cost as the corresponding polynomial closure while being realizable and non-linear, better adapting to the correct eigenvalues of the true Fokker-Planck solution. 
Although first-order mixed moments completely resolved the zero-netflux problem of the $\MN[1]$ model, a first order approximation in two dimensions is in general not enough, as has been shown in the two-beams test-case. Under some assumptions on the alignment of the beams this drawback can be completely removed. On the other hand, going to higher-order approximations, like $\MN$ or $\MMN$ with $\momentorder\geq 2$, a much better approximation can be done. 
Thus it seems reasonable to extend the concept of Kershaw closures for either $\KN$ or $\MKN$ for $\momentorder\geq 2$ to obtain a cheap and accurate approximation of the Fokker-Planck equation. Furthermore, more research is necessary to obtain better-quality high-order numerical solutions of Kershaw closures.
\appendix
\section{Calculation of partial-moment integrals}
\label{sec:CalculationPMintegrals}
\begin{align*}
\intpp{\SCx^{\basisindx}\SCy^{\basisindy}} &= \frac{1}{2}\betafun\left(\frac{1}{2},1+\frac{1}{2}\left(\basisindx+\basisindy\right)\right)\betafun\left(\frac{\basisindx+1}{2},\frac{\basisindy+1}{2}\right)\\
\intmp{\SCx^{\basisindx}\SCy^{\basisindy}} &= \frac{\left(-1\right)^{\basisindx}}{2}\betafun\left(\frac{1}{2},1+\frac{1}{2}\left(\basisindx+\basisindy\right)\right)\betafun\left(\frac{\basisindx+1}{2},\frac{\basisindy+1}{2}\right)\\
\intmm{\SCx^{\basisindx}\SCy^{\basisindy}} &= \frac{\left(-1\right)^{\basisindx+\basisindy}}{2}\betafun\left(\frac{1}{2},1+\frac{1}{2}\left(\basisindx+\basisindy\right)\right)\betafun\left(\frac{\basisindx+1}{2},\frac{\basisindy+1}{2}\right)\\
\intpm{\SCx^{\basisindx}\SCy^{\basisindy}} &= \frac{\left(-1\right)^{\basisindy}}{2}\betafun\left(\frac{1}{2},1+\frac{1}{2}\left(\basisindx+\basisindy\right)\right)\betafun\left(\frac{\basisindx+1}{2},\frac{\basisindy+1}{2}\right)\\
\intsxp{\SCx^\basisind} &= (-1)^\basisind\intsxm{\SCx^\basisind} = \intsyp{\SCy^\basisind} = (-1)^\basisind\intsym{\SCy^\basisind} = \frac{2\pi}{\basisind+1}
\end{align*}
where $\betafun\left(z,w\right)$ is the beta function.

% Bibliography
%%%%%%%%%%%%%%
\bibliographystyle{siam}
\bibliography{bibliography}

\end{document}